\newtheorem{lemma}{Lemma}
\newtheorem{thm}{Theorem}
\newtheorem*{thm*}{Theorem}
\newtheorem*{conj*}{Conjecture}
\newtheorem{prop}[thm]{Proposition}
\theoremstyle{remark} \newtheorem{remark}{Remark}
\theoremstyle{definition} \newtheorem{ex}{Example} 
\newtheorem*{defn}{Definition}
\DeclareMathOperator{\Ima}{Im}
\newcommand{\bbA}{\mathbb{A}}
\newcommand{\bbC}{\mathbb{C}}
\newcommand{\bbP}{\mathbb{P}}
\newcommand{\calO}{\mathcal{O}}
\newcommand{\calT}{\mathcal{T}}
\newcommand{\calN}{\mathcal{N}}
\newcommand{\calH}{\mathcal{H}}
\newcommand{\calIC}{\mathcal{IC}_X^\bullet}
\newcommand{\calICT}{\mathcal{IC}_{\tau_{m,n,k}}^\bullet}
\newcommand{\calCC}{\mathcal{CC}}
\newcommand{\calF}{\mathcal{F}}
\newcommand{\codim}{\operatorname{codim}}
\newcommand{\tr}{\operatorname{trace}}
\newcommand{\ind}{\mathds{1}}
\newcommand\numberthis{\addtocounter{equation}{1}\tag{\theequation}}
\title[Euler Obstruction, Chern Class and IC Characteristic Cycles]{Local Euler Obstructions, Chern-Mather classes and IC Characteristic Cycles of Determinantal Varieties}
 \author{Xiping Zhang}
\date{\today}
\begin{document}

\bibliographystyle{plain}

\begin{abstract}\ 
For $m\geq n$, Let $K$ be an algebraically closed field, and define $\tau_{m,n,k}$ to be the set of $m\times n$ matrices over $K$ with kernel dimension $\geq k$. This is a projective subvariety 
of $\bbP^{mn-1}$, and is usually called determinantal variety. In most cases $\tau_{m,n,k}$ is singular with singular locus 
$\tau_{m,n,k+1}$. 
In this paper we compute the local Euler obstruction of $\tau_{m,n,k}$,
and we prove that the characteristic cycle of the intersection cohomology
complex of $\tau_{m,n,k}$ is irreducible.
We also give an explicit formula for the Chern-Mather class of $\tau_{m,n,k}$
as a class in the projective space. The irreducibility of the intersection cohomology characteristic cycle
follows from the explicit computation of the local Euler obstruction, a
study of the `Tjurina transforms' of determinantal varieties, and the Kashiwara-Dubson's 
microlocal index theorem. 

Our explicit formulas are based on calculations of
degrees of certain Chern classes of the universal bundles over the Grassmannian.
We use Macaulay2 to exhibit examples of the Chern-Mather class and the 
class of the projectived characteristic cycle of $\tau_{m,n,k}$ for some small values of $m,n,k$.

Over the complex numbers, the local Euler obstruction of $\tau_{m,n,k}$ 
was recently computed by N.~Grulha, T.~Gaffney and M.~Ruas by methods
in complex geometry~\cite{NG-TG}.
\end{abstract}

\maketitle
\section{Introduction}
Let $K$ be an algebraic closed field.
For $m\geq n$, we define $\tau_{m,n,k}$ to be the set of all $m$ by $n$ matrices over 
$K$ which have kernel
of dimension $\geq k$. This is an irreducible projective subvariety of $\bbP^{mn-1}$, and in most cases $\tau_{m,n,k}$ is singular with singular locus $\tau_{m,n,k+1}$. The varieties $\tau_{m,n,k}$ 
are called determinantal varieties, and have been the object of intense study. 
(See e.g.,~\cite{MR1143555},~\cite[\S14.4]{INT},~\cite[Lecture 9]{AG-JH}.)

One very important local invariant on a singular variety is the local Euler obstruction
$Eu_X$. It was first defined by R.D. MacPherson for compact complex algebraic varieties 
in \cite{MAC}, as an ingredient in his construction of a natural transformation 
from the functor of constructible functions to homology. This is the unique natural transform that normalize to the total Chern class on smooth varieties, and MacPherson denoted it by $c_*$. 

In the same paper, MacPherson also defined the Chern-Mather class 
$c_M(X)$ for any (possibly singular)
variety $X$ embedded in an ambient space $M$. The natural transformation $c_*$ is defined by sending $Eu_X$ to $c_M(X)$.

MacPherson worked over $\bbC$, and the definition of local Euler obstruction given 
in \cite{MAC} relies on complex geometry.
Both the local Euler obstruction and the Chern-Mather class can be defined over 
arbitrary algebraically closed fields $K$ (cf. \cite{MR629121}), while the natural transformation $c_*$ extends to arbitrary base fields of characteristic $0$ (cf.~\cite{Kennedy}). Also one can translate the whole story from the homology group to the Chow group (cf.~\cite[19.1.7]{INT}). 

For $K=\bbC$ is the complex number field, N.~Grulha, T.~Gaffney, and M.~Ruas computed in \cite{NG-TG} the local 
Euler obstruction of $\tau_{m,n,k}$, using methods from complex geometry: they found (\cite[Theorem~1.16]{NG-TG}) that the values of the local Euler obstruction of
$\tau_{m,n,k}$ along strata determined by smaller determinantal varieties are given by
binomial coefficients, so that they satisfy a basic recursion formula.
(Actually, Grulha, Gaffney and Ruas~worked with the affine determinantal variety 
$\Sigma_{m,n,k}$, that is, the affine cone over $\tau_{m,n,k}$; it is easy to see that this
does not affect the local Euler obstruction.)

In this paper we use a direct intersection-theoretic approach and prove the following formulas for the local Euler obstruction of $\tau_{m,n,k}$, valid over
any algebraically closed base field $K$.

\begin{thm*}[Theorem~\ref{thm; eulerobstruction}]
For any $\varphi\in\tau_{m,n,k+i}\smallsetminus\tau_{m,n,k+i+1}$, the local Euler obstruction of $\tau_{m,n,k}$ at $\varphi$ is:
\begin{align*}
Eu_{\tau_{m,n,k}}(\varphi)
&=\int_{G(k,k+i)\times G(i,m-n+k+i)} c^{-1}(S_1^\vee\otimes Q_2)c^{-1}(Q_1^\vee\otimes S_2) \\
&=\int_{G(k,k+i)\times G(i,m-n+k+i)} c_{top}(S_1^\vee\otimes S_2)c_{top}(Q_1^\vee\otimes Q_2) 
\end{align*}
\end{thm*}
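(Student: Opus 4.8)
The plan is to compute the local Euler obstruction $Eu_{\tau_{m,n,k}}(\varphi)$ via its Nash-blowup definition, using the "Tjurina transform" mentioned in the abstract as a concrete resolution of the relevant incidence correspondence. Recall that $\tau_{m,n,k}$ is the variety of $m\times n$ matrices $A$ with $\dim\ker A \geq k$. The natural desingularization (or at least a resolution with Grassmannian fibers) is the incidence variety $\widetilde{\tau}_{m,n,k} = \{(A, W) : W \in G(k,n),\ A\cdot W = 0\}$, which fibers over $G(k,n)$ as the total space of the bundle $\operatorname{Hom}(K^n/W, K^m) = Q^\vee \otimes K^m$ where $Q$ is the universal quotient bundle; this is smooth, and the projection to $\tau_{m,n,k}$ is an isomorphism over the open stratum $\tau_{m,n,k}\smallsetminus\tau_{m,n,k+1}$. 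First I would recall (from the earlier part of the paper) the precise relationship between the Nash blowup of $\tau_{m,n,k}$ and this Tjurina-type transform, so that the local Euler obstruction at $\varphi$ can be expressed as an integral over the fiber of the Nash blowup of the Chern class of the tautological (Nash) bundle against the local Euler obstruction cycle — concretely, as the degree of a suitable cohomology class on the fiber.

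Next, I would identify the fiber of the relevant resolution over a point $\varphi$ of rank $n-k-i$ (so $\dim\ker\varphi = k+i$). A matrix $\varphi$ of kernel dimension $k+i$ has the property that the set of $k$-dimensional subspaces $W \subseteq \ker\varphi$ is exactly $G(k, k+i)$. But to get the full Euler obstruction one must also track the behavior of the Nash bundle, which records limiting tangent spaces; the tangent space to $\tau_{m,n,k}$ at a smooth point $A$ of rank $r=n-k$ is $\{B : B(\ker A)\subseteq \operatorname{Im} A\}$, and taking limits as $A\to\varphi$ along the smooth locus produces, over the chosen $W\in G(k,k+i)\subseteq \ker\varphi$, an additional choice of an $i$-dimensional "direction" — a point of a Grassmannian $G(i, m-n+k+i)$ coming from how the image degenerates (the cokernel jumps by $i$, and $m-(n-k) = m-n+k$, so the relevant ambient dimension for the extra image directions is $m-n+k+i$). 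Thus the fiber of the Nash blowup over $\varphi$, together with the data needed to evaluate the Chern class of the Nash bundle, is governed by $G(k,k+i)\times G(i,m-n+k+i)$, with universal sub/quotient bundles $S_1, Q_1$ on the first factor and $S_2, Q_2$ on the second.

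Then I would carry out the Chern-class bookkeeping. On the product $G(k,k+i)\times G(i,m-n+k+i)$, the normal/Nash contribution decomposes: the "kernel directions" contribute the inverse Chern class $c^{-1}(S_1^\vee\otimes Q_2)$ and the "image directions" contribute $c^{-1}(Q_1^\vee\otimes S_2)$, because the relevant vector bundle governing the limiting tangent spaces modulo the tautological directions splits as $(S_1^\vee\otimes Q_2)\oplus(Q_1^\vee\otimes S_2)$ up to the pieces that are already tautological. The local Euler obstruction is the degree over this product of the top Chern class of the tautological bundle against the total Chern class of (the dual of) the Nash-bundle-minus-tautological, which after the standard manipulation $c(E)\cdot c(E)^{-1}=1$ and using that $\dim G(k,k+i)+\dim G(i,m-n+k+i) = k i + i(m-n+k) = \operatorname{rank}(S_1^\vee\otimes Q_2)+\operatorname{rank}(Q_1^\vee\otimes S_2)$ turns the inverse-Chern-class integrand into the top-Chern-class integrand $c_{top}(S_1^\vee\otimes S_2)\,c_{top}(Q_1^\vee\otimes Q_2)$ (this is the "sub/quotient flip": $c^{-1}(S_1^\vee\otimes Q_2)$ integrates the same as $c_{top}(S_1^\vee\otimes S_2)$ on $G(k,k+i)$, and similarly on the other factor). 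This gives the two displayed expressions.

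The main obstacle will be Step two: making precise and rigorous the identification of the Nash blowup fiber (with its Nash bundle) over a point $\varphi\in\tau_{m,n,k+i}\smallsetminus\tau_{m,n,k+i+1}$ in terms of $G(k,k+i)\times G(i,m-n+k+i)$ and pinning down exactly which universal bundles control the limiting tangent spaces. This requires a careful local analysis of limits of tangent spaces to the smooth locus of $\tau_{m,n,k}$ — equivalently a careful study of the "Tjurina transform" and its comparison with the Nash blowup — and is where the geometry (as opposed to formal intersection theory) is genuinely used. Once the fiber and the Nash bundle are correctly described, the remaining Chern-class computation is routine manipulation using the splitting principle and the sub/quotient symmetry of Grassmannian integrals; the only care needed there is to check that the dimensions match so that the passage between the $c^{-1}(\cdot)$ form and the $c_{top}(\cdot)$ form is valid, which I expect to follow from a direct count of $\dim\tau_{m,n,k}$ and of the relevant Grassmannians.
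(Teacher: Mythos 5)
Your overall strategy is the one the paper actually uses: apply the Gonz\'alez-Sprinberg--Verdier formula over the fiber of the Nash blowup at $\varphi$, identify that fiber with $G(k,k+i)\times G(i,m-n+k+i)$, and then do Chern-class bookkeeping ending with the Whitney-sum/dimension-count argument that converts the inverse-Chern-class integrand into the top-Chern-class integrand. Your identification of the fiber (the $G(k,k+i)$ of kernel planes times the $G(i,m-n+k+i)$ of $(n-k)$-planes containing $\Ima\varphi$) and the description of the smooth tangent space as $\{B: B(\ker A)\subset \Ima A\}$ are both correct and match the paper.

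There are, however, two genuine problems. First, your opening paragraph sets up the computation on the Tjurina transform $\hat\tau_{m,n,k}\subset G(k,n)\times\bbP^{mn-1}$, but that variety is \emph{not} the Nash blowup: its fiber over $\varphi$ is only $G(k,k+i)$, with no second Grassmannian factor, and the Euler obstruction integral computed there would be wrong. The correct object (which you drift toward in your second paragraph) is the double incidence correspondence $N_{m,n,k}=\{(\Lambda,\Gamma,\varphi):\Lambda\subset\ker\varphi,\ \Ima\varphi\subset\Gamma\}\cong\bbP(Q_1^\vee\otimes S_2)$ over $G(k,n)\times G(n-k,m)$; proving that this is the Nash blowup (i.e., that the map $(\Lambda,\Gamma)\mapsto\{B:B(\Lambda)\subset\Gamma\}$ into the Grassmannian of tangent planes is a closed embedding extending the Gauss map) and computing $c(\calT)=c(\calO(1))^{mn}c^{-1}(S_1^\vee\otimes Q_2\otimes\calO(1))$ is the real content of the argument, and you explicitly defer it. Without it you cannot justify the integrand; in particular you also need the Segre class $s(\pi^{-1}(\varphi),N_{m,n,k})=c(Q_1^\vee\otimes S_2\otimes\calO(1))^{-1}$, obtained from the Euler sequence of the projective bundle, and the observation that $\calO(1)$ trivializes on the fiber. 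Second, your ``sub/quotient flip'' is stated factor-by-factor ($c^{-1}(S_1^\vee\otimes Q_2)$ ``integrates the same as'' $c_{top}(S_1^\vee\otimes S_2)$ on $G(k,k+i)$ alone), which does not make sense since each of these bundles mixes the two factors. The correct identity is the combined one: from $0\to S_1^\vee\otimes S_2\to S_1^\vee\otimes K^m\to S_1^\vee\otimes Q_2\to 0$ and its $Q_1$-analogue, together with $c(S_1^\vee)c(Q_1^\vee)=1$, one gets $c^{-1}(S_1^\vee\otimes Q_2)\,c^{-1}(Q_1^\vee\otimes S_2)=c(S_1^\vee\otimes S_2)\,c(Q_1^\vee\otimes Q_2)$ as classes on the product, and only then does the rank count $ki+i(m-n+k)=\dim\bigl(G(k,k+i)\times G(i,m-n+k+i)\bigr)$ reduce the integral to the top Chern classes. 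With these two points repaired, your outline becomes the paper's proof.
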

Here, $S_l$ and $Q_l$, $l=1,2$ denote respectively the universal subbundles and
quotient bundles over the two factors in the product $G(k,k+i)\times G(i,m-n+k+i)$.

A direct observation from the above theorem is that the Euler obstruction of $\tau_{m,n,k}$ can be obtained from smaller determinantal varieties. For any $\varphi\in \tau_{m,n,j}\smallsetminus \tau_{m,n,j+1}$, and any $\varphi'\in \tau_{m-n+j+1, j+1,j}$, we have;
\[
Eu_{\tau_{m,n,k}}(\varphi)=Eu_{\tau_{m-n+j+1,j+1,k}}(\varphi') \/.
\]

From this observation  we are able to derive the basic recursion formula (Pascal's Triangle)
satisfied by the local Euler obstruction,
thereby extending the result of \cite[Theorem~1.16]{NG-TG} to arbitrary
algebraically closed fields. We define $Eu_{m,n,k}:=Eu_{\tau_{m,n,k}}(\varphi)$ for any $\varphi\in \tau_{m,n,n-1}$ to be the local Euler obstruction of $\tau_{m,n,k}$ along $\tau_{m,n,n-1}$. 
\begin{thm*}[Theorem~\ref{thm; pascal}] 
We have the following recursive formula for $k=0,1,\cdots n-2$:
\[
Eu_{m,n,k}+Eu_{m,n,k+1}=Eu_{m+1,n+1,k+1} \/.
\]
Hence we have 
\[
Eu_{m,n,k}=\binom{n-1}{k}
\]
and therefore
\begin{equation}
\label{eq; Eubinom}
Eu_{\tau_{m,n,k}}(\varphi)=\binom{k+i}{i}
\end{equation}
for any $\varphi\in \tau_{m,n,k+i}\smallsetminus \tau_{m,n,k+i+1}$.
\end{thm*}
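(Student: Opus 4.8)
The plan is to establish the recursion first and then extract the closed form by induction.

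\textbf{Step 1: The recursion via the integral formula.} I would start from the integral formula of Theorem~\ref{thm; eulerobstruction}. Applying it with $\varphi \in \tau_{m,n,n-1}$ (so $i = n-1-k$, and the ambient generic stratum is the bottom one) gives
\[
Eu_{m,n,k} = \int_{G(k,n-1)\times G(n-1-k,\,m-1)} c_{top}(S_1^\vee\otimes S_2)\,c_{top}(Q_1^\vee\otimes Q_2).
\]
Likewise $Eu_{m,n,k+1}$ is an integral over $G(k+1,n-1)\times G(n-2-k,\,m-1)$, and $Eu_{m+1,n+1,k+1}$ is an integral over $G(k+1,n)\times G(n-1-k,\,m)$. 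The heart of the matter is a purely combinatorial/intersection-theoretic identity relating these three integrals of products of top Chern classes of tensor bundles on products of Grassmannians. I would prove the additive relation
\[
Eu_{m,n,k}+Eu_{m,n,k+1}=Eu_{m+1,n+1,k+1}
\]
by exhibiting, on the larger Grassmannian $G(k+1,n)\times G(n-1-k,m)$, a geometric decomposition (for instance, intersecting with a hyperplane-type condition, or using a projection/Schubert-cell argument) whose two pieces reproduce the two summands. A clean way is to fix a flag element and split $G(k+1,n)$ according to whether the $(k+1)$-plane contains a chosen line or lies in a chosen hyperplane; pushing forward along the corresponding projections and tracking how $S_l, Q_l$ restrict should yield exactly the two smaller integrals. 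Alternatively one can pass to the Chern–Mather class statements proved earlier and read off the recursion from the degree formulas.

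\textbf{Step 2: Base cases and induction.} With the recursion in hand, I would check base cases: $Eu_{m,n,0} = 1$ for all $m \geq n$ (since $\tau_{m,n,0} = \bbP^{mn-1}$ is smooth, or directly because the integral collapses), and $Eu_{m,n,n-1} = 1$ (the smooth point stratum contributes the value $1$; equivalently $\binom{n-1}{n-1}=1$). Then Pascal's identity $\binom{n-1}{k} + \binom{n-1}{k+1} = \binom{n}{k+1}$ matches the recursion $Eu_{m,n,k} + Eu_{m,n,k+1} = Eu_{m+1,n+1,k+1}$ verbatim, so a straightforward double induction on $n$ (and trivially in $m$, since the formula is independent of $m$) gives $Eu_{m,n,k} = \binom{n-1}{k}$.

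\textbf{Step 3: The general stratum formula.} Finally, for $\varphi \in \tau_{m,n,k+i}\smallsetminus\tau_{m,n,k+i+1}$ I invoke the observation already recorded in the text, namely $Eu_{\tau_{m,n,k}}(\varphi) = Eu_{\tau_{m-n+j+1,j+1,k}}(\varphi')$ with $j = k+i$: this reduces the value at a general point of the stratum $\tau_{m,n,k+i}$ to the value of $Eu_{\tau_{m',n',k}}$ along its \emph{smallest} stratum with $n' = k+i+1$, which by Step~2 equals $\binom{n'-1}{k} = \binom{k+i}{k} = \binom{k+i}{i}$. This yields \eqref{eq; Eubinom}.

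\textbf{Main obstacle.} The only real work is Step~1: proving the three-term additive identity among the Grassmannian integrals. The induction and the reduction to smaller determinantal varieties are formal once that identity is available. I expect the cleanest route is to realize all three integrals as degrees of Chern–Mather classes (already computed in the paper) and verify the recursion at the level of those explicit polynomial/binomial expressions, sidestepping a direct manipulation of the tensor-bundle Chern classes; but a self-contained Schubert-calculus proof — decomposing the top Chern class of $S_1^\vee\otimes S_2$ on the larger product via a section vanishing locus — should also work and is perhaps more illuminating.
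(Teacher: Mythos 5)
Your overall architecture---prove the three-term identity among the Grassmannian integrals, feed it into Pascal's triangle with the boundary values $Eu_{m,n,0}=Eu_{m,n,n-1}=1$, and then use the reduction $Eu_{\tau_{m,n,k}}(\varphi)=Eu_{\tau_{m-n+j+1,j+1,k}}(\varphi')$ to pass from the smallest stratum to a general one---is exactly the paper's, and your Steps~2 and~3 are sound (for Step~2 one inducts along each diagonal $m-n=\mathrm{const}$, which is what the recursion preserves, so you neither need nor yet know independence of $m$). The genuine gap is Step~1, which you correctly identify as the only real work but do not carry out. The two loci you propose---$(k+1)$-planes containing a fixed line, and $(k+1)$-planes contained in a fixed hyperplane---are indeed the two sub-Grassmannians the paper uses (the images of the embeddings $j$ and $i$, whose classes are $c_{top}(Q)$ and $c_{top}(S^\vee)$ respectively), but they do not cover the Grassmannian, so no stratification or section-vanishing argument literally decomposes the integral into the two summands. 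What is actually required is a class-level identity: writing $P=c_{top}(S_1^\vee\otimes S_2)\,c_{top}(Q_1^\vee\otimes Q_2)$ in Chern roots, one must show that every monomial in its expansion is divisible by either $c_{top}(S_1^\vee)c_{top}(Q_2)$ or $c_{top}(Q_1)c_{top}(S_2^\vee)$---the two cross terms are killed by the vanishing $c_{top}(S_l)c_{top}(Q_l)=c_{top}(\text{trivial bundle})=0$---so that $P=A\,c_{top}(S_1^\vee)c_{top}(Q_2)+B\,c_{top}(Q_1)c_{top}(S_2^\vee)$; one must then verify that $B$ and $A$ pull back under the two embeddings to the corresponding top Chern classes on the smaller products, which requires tracking how one Chern root of $S$ (resp.\ $Q$) specializes to $0$ under $j$ (resp.\ $i$). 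None of this is routine, and without it the recursion is an unproved assertion.

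Your proposed fallback---realizing the three integrals as degrees of Chern--Mather classes ``already computed in the paper'' and checking the recursion there---would be circular. The identification $c_M(\tau_{m,n,k})=\nu_*(c(T_{\hat\tau_{m,n,k}})\cap[\hat\tau_{m,n,k}])$ underlying Theorem~\ref{thm; cmalgorithm} is proved in \S\ref{cmather} precisely by using $Eu_{\tau_{m,n,k}}(\varphi)=\binom{k+i}{k}=\chi(G(k,k+i))$, i.e., the formula~\eqref{eq; Eubinom} you are trying to establish. Moreover, the quantities in Theorem~\ref{thm; eulerobstruction} are integrals over the Nash fibers $G(k,k+i)\times G(i,m-n+k+i)$, not degrees of classes of $\tau_{m,n,k}$ itself, so the translation you would need is not available. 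The self-contained Schubert-calculus route you mention last is the right one, but it must be fleshed out with the Chern-root lemma above.
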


As an application of this result (over~$\bbC$), we prove the following result:
\begin{thm*}[Theorem~\ref{thm; charcycle}] 
The characteristic cycle associated with the
intersection cohomology sheaf of $\tau_{m,n,k}$ equals the conormal cycle
of $\tau_{m,n,k}$ (hence it is irreducible).
\end{thm*}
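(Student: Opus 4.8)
The plan is to combine the explicit value of the local Euler obstruction (Theorem~\ref{thm; pascal}, equivalently formula~\eqref{eq; Eubinom}) with the Kashiwara--Dubson microlocal index theorem; to make the index theorem usable I would first determine the stalk cohomology of $\calICT$ by means of the Tjurina transform of $\tau_{m,n,k}$. Throughout, $\tau_{m,n,k}$ carries the stratification by the smooth strata $\tau_{m,n,k+i}\smallsetminus\tau_{m,n,k+i+1}$, $i\ge 0$, and $\calICT$ is constructible for it; since $\tau_{m,n,k+i}$ is the closure of its $i$-th stratum, the characteristic cycle has the shape
\[
\mathrm{CC}(\calICT)=[T^{*}_{\tau_{m,n,k}}\bbP^{mn-1}]+\sum_{i\ge 1}c_i\,[T^{*}_{\tau_{m,n,k+i}}\bbP^{mn-1}],
\]
where $[T^{*}_{Z}\bbP^{mn-1}]$ denotes the (irreducible) conormal cycle of a subvariety $Z$, the leading coefficient is $1$ because $\calICT$ restricts to the constant sheaf on the smooth locus, and the $c_i$ are nonnegative integers by the effectivity of characteristic cycles of perverse sheaves. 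Thus the theorem amounts to proving $c_i=0$ for every $i\ge 1$, and the irreducibility is then automatic.

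To compute the stalks of $\calICT$ I would use the Tjurina transform $\pi\colon\widetilde{\tau}\to\tau_{m,n,k}$, where $\widetilde{\tau}\subset\bbP^{mn-1}\times G(k,n)$ is the incidence variety of pairs $(\varphi,W)$ with $W$ a $k$-dimensional subspace contained in $\ker\varphi$, and $\pi$ is the projection to $\varphi$. Projecting to the other factor presents $\widetilde{\tau}$ as a projective bundle over the Grassmannian $G(k,n)$ (the fibre over $W$ being the projectivized space of matrices vanishing on $W$), so $\widetilde{\tau}$ is smooth and irreducible of dimension $\dim\tau_{m,n,k}$; moreover $\pi$ is proper and restricts to an isomorphism over $\tau_{m,n,k}\smallsetminus\tau_{m,n,k+1}$ (where $\ker\varphi$ is the unique $k$-dimensional subspace it contains), hence is birational, hence a resolution of singularities. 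Over a point $\varphi\in\tau_{m,n,k+i}\smallsetminus\tau_{m,n,k+i+1}$ the fibre $\pi^{-1}(\varphi)$ is $G(k,k+i)$, of dimension $ki$, whereas the codimension of that stratum in $\tau_{m,n,k}$ equals $\dim\tau_{m,n,k}-\dim\tau_{m,n,k+i}=i(m-n+2k+i)$ (a standard computation with dimensions of determinantal varieties); since $m\ge n$ and $i\ge 1$ one has $2ki<i(m-n+2k+i)$, i.e.\ $0<m-n+i$, so $\pi$ is a \emph{small} resolution. By the characterisation of intersection cohomology via small resolutions together with proper base change, $\mathcal{H}^{j}(\calICT)_{\varphi}\cong H^{j}(\pi^{-1}(\varphi);\bbQ)$ for all $j$, so
\[
\chi_{\varphi}(\calICT):=\sum_{j}(-1)^{j}\dim_{\bbQ}\mathcal{H}^{j}(\calICT)_{\varphi}=\chi\bigl(\pi^{-1}(\varphi)\bigr)=\chi\bigl(G(k,k+i)\bigr)=\binom{k+i}{i}\qquad(\varphi\in\tau_{m,n,k+i}\smallsetminus\tau_{m,n,k+i+1}),
\]
the last equality because $H^{*}(G(k,k+i);\bbQ)$ is concentrated in even degrees with total dimension $\binom{k+i}{i}$.

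Now I would feed the displayed $\mathrm{CC}(\calICT)$ into the microlocal index theorem. For $\varphi\in\tau_{m,n,k+j}\smallsetminus\tau_{m,n,k+j+1}$ it yields
\[
\chi_{\varphi}(\calICT)=Eu_{\tau_{m,n,k}}(\varphi)+\sum_{i=1}^{j}\varepsilon_i\,c_i\,Eu_{\tau_{m,n,k+i}}(\varphi),
\]
where $\varepsilon_i=(-1)^{\dim\tau_{m,n,k}-\dim\tau_{m,n,k+i}}\in\{\pm 1\}$ are the codimension signs (their precise values will not matter), and the coefficient of $Eu_{\tau_{m,n,k}}$ is exactly $1$ because $\calICT$ agrees with the constant sheaf on the smooth locus. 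By Theorem~\ref{thm; pascal}, in the form~\eqref{eq; Eubinom}, $Eu_{\tau_{m,n,k}}(\varphi)=\binom{k+i}{i}$, which the computation of the previous step identifies with $\chi_{\varphi}(\calICT)$; hence $\chi_{\bullet}(\calICT)=Eu_{\tau_{m,n,k}}$ as constructible functions on $\tau_{m,n,k}$. Substituting this equality into the index formula leaves
\[
\sum_{i=1}^{j}\varepsilon_i\,c_i\,Eu_{\tau_{m,n,k+i}}(\varphi)=0\qquad(\,j\ge 1,\ \varphi\in\tau_{m,n,k+j}\smallsetminus\tau_{m,n,k+j+1}\,).
\]
Applying~\eqref{eq; Eubinom} once more, $Eu_{\tau_{m,n,k+i}}(\varphi)=\binom{k+j}{j-i}$ for such $\varphi$ and $i\le j$; in particular the $i=j$ coefficient is $\binom{k+j}{0}=1$, so these relations form a lower-triangular linear system in $j$ with diagonal entries $\varepsilon_j=\pm1$. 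Forward substitution therefore forces $c_j=0$ for every $j\ge 1$, and we conclude $\mathrm{CC}(\calICT)=[T^{*}_{\tau_{m,n,k}}\bbP^{mn-1}]$, the conormal cycle of $\tau_{m,n,k}$, which is irreducible.

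The main obstacle, and the true content of the argument, is the study of the Tjurina transform: recognising $\widetilde{\tau}$ as a projective bundle over the Grassmannian so that $\widetilde{\tau}$ is smooth, and checking the numerical estimate $0<m-n+i$ that makes the resolution $\pi$ small. Once smallness is secured, the identification $\calICT\cong R\pi_{*}\bbQ_{\widetilde{\tau}}$ (up to shift) and the stalk Euler characteristics $\binom{k+i}{i}$ come for free, and the sign bookkeeping in the index theorem is harmless, since the inductive vanishing only uses that $Eu_{\tau_{m,n,k+j}}$ equals $1$ on its own open stratum together with the identity $\chi_{\bullet}(\calICT)=Eu_{\tau_{m,n,k}}$ provided by Theorem~\ref{thm; pascal} and the small resolution.
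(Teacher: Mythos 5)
Your proposal is correct and follows essentially the same route as the paper: show the Tjurina transform is a small resolution, use the Goresky--MacPherson small-resolution theorem to get the stalk Euler characteristics $\binom{k+i}{i}$, and combine these with the binomial Euler-obstruction values via the Kashiwara--Dubson index theorem to solve a triangular linear system forcing $c_i=\delta_{ik}$. The only cosmetic differences are your sign-hedging $\varepsilon_i$ (the paper's index formula has no signs, and as you note they are irrelevant to the triangular elimination) and your a priori normalization $c_0=1$, which the paper instead extracts from the same linear system.
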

We remind the reader that for every irreducible subvariety $X\subset M$ of a
smooth complex algebraic variety,
Goresky and MacPherson defined a sheaf of bounded complexes on $M$, denoted by 
$\calIC$, 
called the \textit{Intersection cohomology Sheaf} of $X$. 
(For more details about the intersection cohomology sheaf and intersection homology,
see~\cite{G-M1} and~\cite{G-M2}.)
As  $\calIC$ is a constructible sheaf with respect to any Whitney 
stratification $\sqcup_{i\in I} S_i$ of $M$, one can assign to it a cycle in the cotangent 
bundle $T^*M$. 
We will call this cycle the `$IC$ characteristic cycle', denoted $\calCC(\calIC)$. The $IC$ characteristic cycle of $X$ can be expressed as a linear combination of the conormal cycles of the strata:
\[
\calCC(\calIC)=\sum_{i\in I} c_i(\calIC)[T^*_{\overline{S_i}}M]\/.
\]
Here the integer coefficients  $c_i(\calIC)$ are called the \textit{Microlocal Multiplicities}. 
(See~\cite[Section 4.1]{Dimca} for an explicit construction of the
$IC$ characteristic cycle and the microlocal multiplicities.) 

In our context, for $X=\tau_{m,n,k}\subset M=\bbP^{mn-1}$,
\[
\calCC(\calICT)=\sum_{i\ge k} c_i(\calICT)[T^*_{\tau_{m,n,i}}\bbP^{mn-1}] \/.
\]
As a corollary of formula~\eqref{eq; Eubinom}, we prove that $c_i(\calICT)=\delta_{ik}$,
establishing that $\calCC(\calICT)$ is irreducible.
As pointed out in \cite[Rmk 3.2.2]{Jones}, this is a rather unusual phenomenon. It is known to be true for Schubert varieties in a Grassmannian, for certain Schubert varieties in flag manifolds of types B, C, and D, and for theta divisors of Jacobians (cf. \cite{MR1642745}). 
By the above result, determinantal varieties also share this 
property.

Our main tool is the deep microlocal index formula of Kashiwara and Dubson
(\cite[Theorem 6.3.1]{Ka}, \cite[Theorem 3]{Du}) and an explicit study of the
so-called `Tjurina transform' $\nu\colon \hat\tau_{m,n,k}\to \tau_{m,n,k}$ of the 
variety $\tau_{m,n,k}$.
Our study of the Tjurina transform also implies that (over any algebraically closed field)
\[
c_M(\tau_{m,n,k})=\nu_*(\hat\tau_{m,n,k}).
\]
This formula is more efficient than a direct application of the definition,
which uses the Nash blow-up of $\tau_{m,n,k}$: this would involve computations
in a product of Grassmannians, while the Tjurina transform only involves computations in a single Grassmannian.

In Theorem \ref{thm; cmalgorithm} we give an explicit formula for $c_M(\tau_{m,n,k})$, and 
in \S\ref{example} we use Macaulay2 to exhibit some examples.
In \S\ref{example} we also give explicit examples of computations of the projectived conormal cycles
(i.e., by the above result, of the projectived $IC$ characteristic cycles) of determinantal varieties.

We note that the class $\nu_*(\hat\tau_{m,n,k})$ is one ingredient in our computation
of the Chern-Schwartz-MacPherson classes of determinantal varieties (cf. \cite{XP1}),
although this class was not identified as the Chern-Mather class in our previous work.

I would like to thank Paolo Aluffi for all the help and support. I would also like to thank Terence Gaffney and Nivaldo G. Grulha Jr.~for the helpful discussions during my visit to Northeastern University.  


\section{Local Euler Obstruction and Chern-Mather Class}
\label{S; setup}
\subsection{MacPherson's Natural Transformation $c_*$}
\label{S; c_*}
In \cite{MAC} MacPherson proved the existence and uniqueness of Chern classes
for possibly singular complex algebraic varieties, which was conjectured earlier
by Deligne and Grothendieck. Let $F$ be the functor of constructible functions, and $A$ be the functor of Chow groups. They are both functors from the category 
\{complex algebraic varieties, proper morphisms\} to the category of abelian groups. 

\begin{thm}[R. D. MacPherson, 1973~\cite{MAC}]
There is a unique natural transformation $c_*$ from the functor
$F$ to the Chow group functor $A$ such that if $X$ is smooth, then 
$c_*(\ind_X)=c(\mathcal{T}_X)\cap [X]$, where $\mathcal{T}_X$ 
is the tangent bundle of $X$.
\end{thm}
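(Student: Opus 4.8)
The plan is to establish uniqueness and existence separately, following the strategy of~\cite{MAC}.

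\emph{Uniqueness.} The only serious input is resolution of singularities, available in characteristic $0$. Naturality of $c_*$ together with the normalization forces $c_*(f_*\ind_W)=f_*\bigl(c(\calT_W)\cap[W]\bigr)$ for every proper morphism $f\colon W\to X$ with $W$ smooth. Now for any subvariety $V\subseteq X$, choose a resolution $\pi\colon\wt V\to V$; then $\pi_*\ind_{\wt V}=\ind_V+\psi$, where $\psi$ is a constructible function supported on the singular locus of $V$, which has strictly smaller dimension. Since the functions $\ind_V$ ($V\subseteq X$ a subvariety) span $F(X)$, an induction on $\dim V$ shows that $c_*$ is pinned down on every $\ind_V$, hence on all of $F(X)$. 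This gives uniqueness.

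\emph{Existence.} I would build $c_*$ in three steps. First, for every subvariety $V$ define the local Euler obstruction $Eu_V$ by means of the Nash blow-up $\nu\colon\wt V\to V$ and its tautological ``Nash bundle'' $\wt\calT$ (the restriction to $\wt V$ of the tautological rank-$\dim V$ subbundle on the relevant Grassmann bundle): $Eu_V(p)$ is the obstruction index for extending a radially pointing section of $\wt\calT$ from $\nu^{-1}(\partial B_\varepsilon(p)\cap V)$ to a nowhere-zero section over $\nu^{-1}(B_\varepsilon(p)\cap V)$. One checks that $Eu_V$ is a well-defined constructible function, with support $V$ and with $Eu_V(p)=1$ at smooth points $p\in V$. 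Second, conclude that $\{Eu_V:V\subseteq X\}$ is a $\bbZ$-basis of $F(X)$: the change-of-basis matrix to $\{\ind_V\}$ is triangular with respect to the partial order by inclusion/dimension, with $1$'s on the diagonal. Third, define the Chern--Mather class $c_M(V):=\nu_*\bigl(c(\wt\calT)\cap[\wt V]\bigr)$, set $c_*(Eu_V):=\iota_*c_M(V)$ (with $\iota\colon V\hookrightarrow X$), and extend $\bbZ$-linearly. When $X$ is smooth one has $Eu_X=\ind_X$ and $\wt X=X$, so $c_*(\ind_X)=c(\calT_X)\cap[X]$, giving the required normalization.

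\emph{Main obstacle.} What remains --- and where essentially all the difficulty lies --- is proving that $c_*$, so defined on the basis $\{Eu_V\}$, is a natural transformation, i.e.\ $f_*\circ c_*=c_*\circ f_*$ for all proper $f$. By Chow's lemma and the usual factorization of proper morphisms this reduces to two cases. For closed embeddings the identity is immediate from the construction. For projections $\pi\colon M\times\bbP^N\to M$ (more generally, projective-bundle projections), the basis property reduces the claim to the single identity
\[
\pi_*\bigl(c_M(V)\bigr)=c_*\bigl(\pi_*Eu_V\bigr)
\]
for $V\subseteq M\times\bbP^N$ irreducible, and this is the genuinely delicate point. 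It is proved by MacPherson's \emph{graph construction}: to a generic one-parameter family of vector-bundle homomorphisms one attaches a limiting cycle whose Chern/Segre-class bookkeeping interpolates between the Nash data of $V$ and that of its image, while on the function side $\pi_*Eu_V$ is computed fibrewise in terms of local Euler obstructions on $M$; matching these two computations is the crux. Everything else is formal or a triangularity argument. An alternative better suited to the microlocal tools used later in this paper is to define $c_*$ through the identification of $F(M)$ with the group of conic Lagrangian cycles in $T^*M$ and take the Chern class of the associated Lagrangian cycle, deducing naturality from the functoriality of characteristic cycles under proper pushforward.
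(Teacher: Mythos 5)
Your proposal follows the same decomposition the paper sketches for MacPherson's theorem: uniqueness via resolution of singularities and the spanning of $F(X)$ by the $\ind_V$, and existence by (1) constructibility of $Eu_V$, (2) the triangular change of basis showing $\{Eu_V\}$ is a basis of $F(X)$, and (3) setting $c_*(Eu_V)=\iota_*c_M(V)$ via the Nash blow-up, with the graph construction handling naturality. This matches the paper's (and MacPherson's) argument, and you correctly identify the naturality of the pushforward as the genuinely hard step.
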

\begin{remark}
MacPherson's original work was on homology groups instead of Chow groups, but one can change settings and get a Chow group version of the theorem. Cf.~\cite[19.1.7]{INT}. 
\end{remark}

In the proof of his theorem, MacPherson defined two important concepts as the main ingredients in his definition of $c_*$: the local Euler obstruction function $Eu_V$ and the Chern-Mather class $c_M(V)$ assigned to any complex variety $V$. Then the proof decomposes into the following parts. Let $X$ be a compact complex variety,
\begin{enumerate}
\item For any subvariety $W\subset X$, $Eu_W$ is a constructible function, i.e., $Eu_W=\sum_{V} e_Z \ind_Z$ for some sub-varieties $Z$ of $X$.
\item $\{Eu_W| W \text{ is a subvariety of } X\}$ form a base for $F(X)$.
\item Define $c_*(Eu_W)=i_*(c_M(W))$ to be the pushforward of the Chern-Mather class of $W$ in $A_*(X)$. This is the unique natural transformation that matches the desired normalization property. 
\end{enumerate}

In the next two sections we will give the definition of the Chern-Mather class and the local Euler obstruction function, together with some basic properties.

\subsection{Nash Blowup and Chern-Mather class}
\label{S; NB}
Let $K$ be any algebraically closed field. Let $M$ be a  smooth ambient variety over $K$, and $X$ be a $n$-dimension subvariety of $M$. Let $X_{sm}\subset X$ be the smooth part of $X$, and define the Gauss map $\alpha\colon X_{sm}\to Gr_n(TM)$ sending $x$ to $T_x X$. Let $S$ be the tautological subbundle on $Gr_n(TM)$. 
\begin{defn}
We define the \textit{Nash blowup} of $X$ to be the closure of $\alpha(X_{sm})$, denoted by $NB$. We also define the restriction of $S$ on $NB$ to be the Nash tangent bundle, denoted by $\calT$.
\end{defn}
One has the following diagram:
\[
\begin{tikzcd}
 NB \arrow{r}{\subset} \arrow{d}{\nu} & Gr_n(TM) \arrow{d}\\
X \arrow{r}{\subset} & M
\end{tikzcd} \/.
\]
Here $\nu$ is the restriction of the projection from $Gr_n(TM)$ to $X$. This is a proper, birational morphism, and restricts to an isomorphism over $X_{sm}$.

\begin{defn}
We define $c_M(X)$, the Chern-Mather class of $X$ to be
\[
c_M(X):=\nu_*(c(\calT)\cap [NB]) \/.
\] 
\end{defn}

\begin{remark} {\quad}\\
\begin{enumerate}
\item The Chern-Mather class of $X$ is independent of the choice of the ambient space $M$, since $\nu$ is made by local embedding and gluing process. Moreover, when $X$ is smooth, 	 $c_M(X)=c(T_X)\cap [X]$ is the total Chern class of $X$. 
\item 
The original definition made by MacPherson was for complex varieties, but one can extend the definition to arbitrary base field $K$. (Cf. \cite{Kennedy}).
\end{enumerate}
\end{remark}

\subsection{Local Euler Obstruction}
\label{S; Euler}
In his paper~\cite{MAC} MacPherson defined the local Euler obstruction of $X$ at $p$ using obstruction theory. This definition relies on the topology of the complex structure of $X$, and to generalize it to arbitrary field we are going to use an equivalent algebraic definition. The algebraic definition was first introduced by Gonz\'alez-Sprinberg and Verdier in \cite{MR629121}, as an integration over the fiber of the Nash Blowup. It agrees with MacPherson's original definition on $\bbC$. 

\begin{defn}
Let $X$ be a complete embeddable variety over field $K$. Let $\nu\colon NB\to X$ and $\calT$ be the Nash Blowup of $X$ and the Nash tangent bundle respectively. 
For any $p\in X$, we define the local Euler obstruction $Eu_V(p)$ to be
\begin{equation}
\label{defn; eulerobstruction}
Eu_V(p)=\int c(\calT)\cap s(\nu^{-1}(p),NB) .
\end{equation}
Here $s(\nu^{-1}(p),NB)$ is the Segre class. 
\end{defn}
\begin{remark}
When $X\to Y$ is a regular embedding of smooth varieties, which is indeed the case in this paper, the Segre class $s(X,Y)$ is the inverse of the Chern class of the normal bundle $\calN_X Y$, i.e., $s(X,Y)=c^{-1}(\calN_X Y)\cap [X]$. For an arbitrary closed embedding, the precise definition and basic properties of Segre class are given in\cite[Chapter 4]{INT}.
\end{remark}

\section{Determinantal Varieties and their Resolutions}
\label{S; determinantalvar}
\subsection{Determinantal Variety}
Let $K$ be an algebraically closed base field. For $m\geq n$, let $M_{m,n}=M_{m,n}(K)$ be the set of $m \times n$ nonzero
matrices over $K$ up to scalar. We view this set as a projective space
 $\bbP^{mn-1}=\bbP(Hom(V_n,V_m))$ for some $n$ dimensional vector space $V_n$ and $m$ dimensional vector space $V_m$ over $K$. 
For $0\leq k\leq n-1$, we consider the subset $\tau_{m,n,k} \subset M_{m,n}$ consisting of all the matrices whose kernel has dimension no less than $k$, or equivalently with rank no bigger than $n-k$. Since the rank condition is equivalent to the vanishing of all $(n-k+1)\times(n-k+1)$ minors, $\tau_{m,n,k}$ is a subvariety of $\bbP^{mn-1}$. The varieties $\tau_{m,n,k}$ are called \textit{(generic) Determinantal Varieties}.

The determinantal varieties have the following basic properties:
\begin{enumerate}
\item When $k=0$, $\tau_{m,n,k}=\bbP^{mn-1}$ is the whole porjective space.
\item When $k=n-1$, $\tau_{m,n,n-1}\cong \bbP^{m-1}\times \bbP^{n-1}$ is isomorphic to the Segre embedding.
\item $\tau_{m,n,k}$ is irreducible, and $\dim\tau_{m,n,k}=(m+k)(n-k)-1$.
\item For $i\leq j$, we have the natural closed embedding $\tau_{m,n,j}\hookrightarrow \tau_{m,n,i}$. Especially, for $j=i+1$, we denote the open subset $\tau_{m,n,i}\smallsetminus \tau_{m,n,i+1}$ by $\tau_{m,n,i}^\circ$.
\item For $k\geq 1$, and $n\geq 3$, the varieties $\tau_{m,n,k}$ are singular with singular locus $\tau_{m,n,k+1}$. Hence $\tau_{m,n,k}^\circ$ is the smooth part of $\tau_{m,n,k}$.
\item For $i=0,1,\cdots ,n-1-k$, the subsets $\tau_{m,n,k+i}^\circ$ form a disjoint decomposition of $\tau_{m,n,k}$. When $K=\bbC$, this is a Whitney stratification of $\tau_{m,n,k}$.
\end{enumerate}

\subsection{The Tjurina Transform}
\label{Tjurina}
In this section we introduce the Tjurina Transform $\hat \tau_{m,n,k}$ as a resolution of $\tau_{m,n,k}$, which will be used later in the computation of the Chern-Mather class. The Tjurina transform is defined to be the incidence correspondence in $G(k,n)\times\bbP^{mn-1}$:
\[
\hat \tau_{m,n,k}:=\{(\Lambda,\varphi)|\varphi\in \tau_{m,n,k};  \Lambda\subset\ker\varphi\}.
\]
And one has the following diagram:
\begin{small}
\[
\begin{tikzcd}
 & \hat\tau_{m,n,k} \arrow{r}{} \arrow{d}{\nu} \arrow[swap]{dl}{\rho} & G(k,n)\times \bbP^{mn-1} \arrow{d} \\
G(k,n)  & \tau_{m,n,k} \arrow{r}{i} & \bbP^{mn-1}.
\end{tikzcd}
\]
\end{small}
As shown in \cite{XP1}, this is a resolution of $\tau_{m,n,k}$, and moreover $\hat\tau_{m,n,k}$ is isomorphic to the projective bundle $\bbP(Q^{\vee m})$ over the Grassmannian $G(k,n)$. Here $Q$ is the universal quotient bundle. Hence we have the following Euler sequence for the tangent bundle of $\hat\tau_{m,n,k}$.
\[
\begin{tikzcd}
0 \arrow{r} & \calO_{\hat\tau_{m,n,k}} \arrow{r} & \rho^*(Q^{\vee m})\otimes \calO_{\hat\tau_{m,n,k}}(1)\arrow{r} & T_{\hat\tau_{m,n,k}}\arrow{r} & \rho^*T_{G(k,n)} \arrow{r} & 0
\end{tikzcd} \/.
\numberthis \label{eulersequence}
\]

Now we show that the Tjurina transform is a small resolution of $\tau_{m,n,k}$. First let's recall the definition of a small resolution:
\begin{defn}
Let $X$ be a irreducible algebraic variety. Let $p\colon Y\to X$ be a resolution of singularities. $Y$ is called a \textit{Small Resolution} of $X$ if for all $i>0$, 
\[
\codim_X \{x\in X| \dim p^{-1}(x)\geq i\} > 2i .
\]
\end{defn}
 
\begin{prop}
\label{prop; small}
The Tjurina transform $\hat \tau_{m,n,k}$ is a small resolution of $\tau_{m,n,k}$.
\end{prop}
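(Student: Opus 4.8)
The plan is to compute the fibers of the map $\nu\colon\hat\tau_{m,n,k}\to\tau_{m,n,k}$ explicitly over each stratum $\tau_{m,n,k+i}^\circ$, and then verify the codimension inequality in the definition of a small resolution. First I would observe that since $\hat\tau_{m,n,k}$ is the incidence variety $\{(\Lambda,\varphi)\mid\Lambda\subseteq\ker\varphi\}$, the fiber $\nu^{-1}(\varphi)$ over a point $\varphi$ is precisely $\{\Lambda\in G(k,n)\mid\Lambda\subseteq\ker\varphi\}$. If $\varphi\in\tau_{m,n,k+i}^\circ$, so that $\dim\ker\varphi=k+i$, then this fiber is exactly the Grassmannian $G(k,k+i)$ of $k$-planes inside a fixed $(k+i)$-dimensional space, which has dimension $\dim G(k,k+i)=k\cdot i$. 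In particular, over the open stratum $\tau_{m,n,k}^\circ$ (where $i=0$) the fiber is a single point, confirming birationality.

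Next I would compute the codimensions. Using property (3) from the list of basic properties, $\dim\tau_{m,n,k}=(m+k)(n-k)-1$ and $\dim\tau_{m,n,k+i}=(m+k+i)(n-k-i)-1$, so the codimension of the locus $\tau_{m,n,k+i}$ inside $\tau_{m,n,k}$ is
\[
\codim_{\tau_{m,n,k}}\tau_{m,n,k+i}=(m+k)(n-k)-(m+k+i)(n-k-i)=i(m-n+2k+i).
\]
The set $\{\varphi\in\tau_{m,n,k}\mid\dim\nu^{-1}(\varphi)\geq j\}$ is then the union of those $\tau_{m,n,k+i}^\circ$ with $ki\geq j$, i.e.\ it equals $\tau_{m,n,k+i_0}$ where $i_0$ is the least $i$ with $ki\geq j$; its codimension is $i_0(m-n+2k+i_0)$. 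So the small resolution condition becomes the inequality $i_0(m-n+2k+i_0)>2j$ for every $j>0$, and since $j\leq ki_0$ (by maximality it suffices to check $j=ki_0$ does not occur, but cleanly one checks the worst case) it is enough to show $i(m-n+2k+i)>2ki$ for all $i\geq 1$ with $k\geq 1$, which simplifies to $m-n+i+i>2k-\text{(wait, recompute)}$: dividing by $i$, the condition is $m-n+2k+i>2k$, i.e.\ $m-n+i>0$, which holds since $m\geq n$ and $i\geq 1$.

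The main obstacle, and the point requiring care, is bookkeeping the quantifier correctly: the definition demands $\codim_X\{x\mid\dim p^{-1}(x)\geq j\}>2j$ for \emph{all} $j>0$, and the fiber dimension jumps in steps of size $k$ as one passes from stratum to stratum, so one must identify, for each $j$, the smallest $i$ with $ki\geq j$ and check the resulting inequality. I would handle the degenerate cases separately: when $k=0$ the map is an isomorphism ($\tau_{m,n,0}=\bbP^{mn-1}$), and when $n\leq 2$ or $k=n-1$ the variety $\tau_{m,n,k}$ is already smooth (property (2) and (5)), so $\hat\tau_{m,n,k}\cong\tau_{m,n,k}$ and there is nothing to prove. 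In the remaining cases $1\leq k\leq n-2$, the computation above shows that for each relevant $j$ the codimension is $i(m-n+2k+i)$ with $ki\geq j$, hence $\geq i(2k+1)>2ki\geq 2j$ using $m-n\geq 0$ and $i\geq 1$, which is the strict inequality required. Thus $\nu$ is a small resolution.
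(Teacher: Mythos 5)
Your proof is correct and follows essentially the same route as the paper's: identify the fiber over $\tau_{m,n,k+i}^\circ$ as $G(k,k+i)$ of dimension $ki$, compute $\codim_{\tau_{m,n,k}}\tau_{m,n,k+i}=i(m-n+2k+i)$, and verify the strict inequality using $m\geq n$. Your handling of the quantifier (taking the \emph{least} $i$ with $ki\geq j$, i.e.\ a ceiling) is in fact slightly more careful than the paper's write-up, which uses a floor where a ceiling is needed.
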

\begin{proof}
Define $L_i=\{x\in \tau_{m,n,k}| \dim \nu^{-1}(x)\geq i \}$. We just need to show that $\codim_{\tau_{m,n,k}} L_i >2i$. Notice that for any $p\in \tau_{m,n,j}^\circ\subset \tau_{m,n,k}$, we have
\[
\nu^{-1}(p)=\{(p,\Lambda)|\Lambda\subset \ker p\}\cong G(k,j) \/.
\]
Hence $\dim \nu^{-1}(p)=k(k-j)$ for any $p\in \tau_{m,n,j}^\circ$, and $L_i=\tau_{m,n,s}$, where $s=k+\lfloor\frac{i}{k}\rfloor$.
So we have
\begin{align*}
\codim_{\tau_{m,n,k}} L_i
=& \codim_{\tau_{m,n,k}} \tau_{m,n,s} \\
=& (m+k)(n-k)-(m+s)(n-s) \\
=& (m+k)(n-k)-(m+k+\lfloor\frac{i}{k}\rfloor)(n-k+\lfloor\frac{i}{k}\rfloor) \\
=&  \lfloor\frac{i}{k}\rfloor((m+k)-(n-k)+\lfloor\frac{i}{k}\rfloor) \\
=&  \lfloor\frac{i}{k}\rfloor(m-n+2k+\lfloor\frac{i}{k}\rfloor) \\
>& 2i \/.
\end{align*}
\end{proof}

When the base field is $\bbC$, $\hat\tau_{m,n,k}$ is the Tjurina transform used in \cite{Tju}. The Tjurina transforms for general determinantal varieties over $\bbC$ is discussed in \cite{MOL}.

\subsection{Nash Blowup of Determinantal Variety}
\label{S; nash}
In this section we describe the Nash Blowup of the variety $\tau_{m,n,k}$ and the Nash tangent bundle. It is the key to our computation of the local Euler obstruction of $\tau_{m,n,k}$.

In \cite[Section 1]{MR2723944} \`Ebeling and Guse\u\i n-Zade constructed the Nash blowup of the affine complex determinantal varieties, i.e., the affine cone $\Sigma_{m,n,k}\subset \bbC^{mn}$ of $\tau_{m,n,k}$. For $\tau_{m,n,k}$ over arbitrary field $K$, we have the following similar result. Define $N_{m,n,k}$ to be the incidence correspondence in $G(k,n)\times G(n-k,m)\times\bbP^{mn-1}$ :
\[
N_{m,n,k}:= \{(\Lambda,\Gamma,\varphi)|\varphi\in \tau_{m,n,k}; \Lambda\subset\ker\varphi;   \Ima\varphi\subset\Gamma\} .
\]
Let $p$ be the projection on $G(k,n)\times G(n-k,m)$, and $\pi$ be the projection on $\bbP^{mn-1}$. One has the following diagram: 
\begin{small}
\[
\begin{tikzcd}
 & N_{m,n,k} \arrow{r}{} \arrow{d}{\pi} \arrow[swap]{dl}{p} & G(k,n)\times G(n-k,m)\times \bbP^{mn-1} \arrow{d} \\
G(k,n)\times G(n-k,m)  & \tau_{m,n,k} \arrow{r}{i} & \bbP^{mn-1}.
\end{tikzcd}
\]
\end{small}

\begin{prop}
Let $(S_1,Q_1)$ and $(S_2,Q_2)$ be the universal subbundles and quotient bundles over $G(k,n)$ and $G(n-k,m)$ respectively.
The variety $N_{m,n,k}$ can be identified with the projective bundle $\bbP(Q_1^{\vee}\otimes S_2)$ over the Grassmannian $G(k,n)\times G(n-k,m)$, hence it is smooth. Moreover, $N_{m,n,k}$ is the Nash Blowup of $\tau_{m,n,k}$.
\end{prop}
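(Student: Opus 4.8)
The plan is to establish the two assertions more or less separately: that $N_{m,n,k}$ is the stated projective bundle is essentially linear algebra, while the identification with the Nash blowup is the substantive point. Throughout I take $1\le k\le n-1$ (for $k=0$ the variety $\tau_{m,n,0}=\bbP^{mn-1}$ is smooth, and the statement must be read with that in mind). For the first part: a point $\varphi\in\bbP^{mn-1}$, viewed inside $Hom(V_n,V_m)$, satisfies $\Lambda\subset\ker\varphi$ and $\Ima\varphi\subset\Gamma$ precisely when $\varphi$ lies in the subspace $Hom(V_n/\Lambda,\Gamma)$, which is exactly the fibre over $(\Lambda,\Gamma)$ of the vector bundle $Q_1^\vee\otimes S_2$ on $G(k,n)\times G(n-k,m)$. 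The natural inclusions $Q_1^\vee=(V_n/S_1)^\vee\hookrightarrow V_n^\vee$ and $S_2\hookrightarrow V_m$ give an inclusion of bundles $Q_1^\vee\otimes S_2\hookrightarrow V_n^\vee\otimes V_m$, hence a closed embedding $\bbP(Q_1^\vee\otimes S_2)\hookrightarrow \big(G(k,n)\times G(n-k,m)\big)\times\bbP^{mn-1}$; by the fibrewise description its image is $N_{m,n,k}$. Thus $N_{m,n,k}\cong\bbP(Q_1^\vee\otimes S_2)$ is smooth, and a rank count gives $\dim N_{m,n,k}=(m+k)(n-k)-1=\dim\tau_{m,n,k}=:d$.

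For the second part, recall $NB=\overline{\{(\varphi,T_\varphi\tau_{m,n,k}):\varphi\in\tau_{m,n,k}^\circ\}}\subset Gr_d(T\bbP^{mn-1})$, with $\tau_{m,n,k}^\circ=\tau_{m,n,k}\smallsetminus\tau_{m,n,k+1}$ its smooth locus. First I would record the tangent space at a smooth point: passing to the affine cone $\Sigma_{m,n,k}\subset Hom(V_n,V_m)$, one has the classical description $T_\varphi\Sigma_{m,n,k}=\{\psi:\psi(\ker\varphi)\subset\Ima\varphi\}$, a $(d+1)$-dimensional subspace containing $\varphi$; under $T_{[\varphi]}\bbP^{mn-1}=Hom(V_n,V_m)/\langle\varphi\rangle$ this yields $T_{[\varphi]}\tau_{m,n,k}=\{\psi:\psi(\ker\varphi)\subset\Ima\varphi\}/\langle\varphi\rangle$. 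Over $\tau_{m,n,k}^\circ$ the pair $(\ker\varphi,\Ima\varphi)$ is the unique $(\Lambda,\Gamma)$ lying over $\varphi$ in $N_{m,n,k}$; and $\{\psi:\psi(\Lambda)\subset\Gamma\}=\ker\big(V_n^\vee\otimes V_m\twoheadrightarrow S_1^\vee\otimes Q_2\big)$ is a rank-$(d+1)$ subbundle $\mathcal W$ over the whole product of Grassmannians. So I would pull $\mathcal W$ back along $p\colon N_{m,n,k}\to G(k,n)\times G(n-k,m)$, note that the tautological line $\pi^*\calO(-1)$ lies inside $p^*\mathcal W$ (because $\varphi(\Lambda)=0$ on $N_{m,n,k}$), and form $(p^*\mathcal W/\pi^*\calO(-1))\otimes\calO(1)\subset\pi^*T\bbP^{mn-1}$, a rank-$d$ subbundle (here one must match the twist in the Euler sequence of $\bbP^{mn-1}$ carefully). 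This subbundle is a morphism $\Phi\colon N_{m,n,k}\to Gr_d(T\bbP^{mn-1})$ over $\pi$, which by the tangent computation restricts on $\tau_{m,n,k}^\circ$ to the Gauss map $\alpha$. Since $N_{m,n,k}$ is irreducible, projective, of dimension $d$, the image $\Phi(N_{m,n,k})$ is closed, irreducible, of dimension $\le d$, and contains $\overline{\alpha(\tau_{m,n,k}^\circ)}=NB$; therefore $\Phi(N_{m,n,k})=NB$.

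It then remains to see that $\Phi\colon N_{m,n,k}\to NB$ is an isomorphism, which I would do by exhibiting an inverse morphism. Over a point $(\varphi,W)\in NB$, the $(d+1)$-dimensional preimage $\widehat W\subset Hom(V_n,V_m)$ of $W$ equals, by the previous paragraph, $\{\psi:\psi(\Lambda)\subset\Gamma\}$ for a genuine triple $(\Lambda,\Gamma,\varphi)\in N_{m,n,k}$; a short computation with the trace pairing identifies its annihilator $\widehat W^{\perp}\subset Hom(V_m,V_n)$ with $Hom(V_m/\Gamma,\Lambda)$, whence $\Lambda=\sum_{\chi\in\widehat W^{\perp}}\Ima\chi$ and $\Gamma=\bigcap_{\chi\in\widehat W^{\perp}}\ker\chi$ (using $k\ge1$ and $m>n-k$). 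Both reconstructions are operations of locally constant rank on the tautological bundle on $NB$, hence globalize to morphisms $NB\to G(k,n)$ and $NB\to G(n-k,m)$; together with $\nu\colon NB\to\tau_{m,n,k}\subset\bbP^{mn-1}$ they define a morphism $g\colon NB\to N_{m,n,k}$. The formulas make it immediate that $g\circ\Phi$ and $\Phi\circ g$ act as the identity on $K$-points, and since $N_{m,n,k}$ and $NB$ are reduced, separated and of finite type over the algebraically closed field $K$, two morphisms agreeing on all closed points coincide; hence $g$ and $\Phi$ are mutually inverse. As $\Phi$ commutes with the maps $\pi$ and $\nu$ to $\tau_{m,n,k}$, this shows $N_{m,n,k}$ (with $\pi$) is the Nash blowup. (Alternatively one can adapt \`Ebeling and Guse\u\i n-Zade's argument for the affine cone.)

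The step I expect to be the main obstacle is pinning down $\Phi$ and the equality $\Phi(N_{m,n,k})=NB$ rigorously. The dimension count is clean, but one must check that $\Phi$ really restricts to the Gauss map on $\tau_{m,n,k}^\circ$, which is the tangent-space computation together with correct bookkeeping of the $\calO(1)$-twist in the Euler sequence, and one must verify that the kernels and images appearing in the construction of $g$ have locally constant rank, so that they are subbundles rather than merely subsheaves — and it is precisely here that the already-established equality $\Phi(N_{m,n,k})=NB$ gets used. A secondary point is the degenerate range: $k=0$ (excluded above, where $\tau_{m,n,0}$ is smooth) and $k=n-1$, where $\tau_{m,n,n-1}\cong\bbP^{m-1}\times\bbP^{n-1}$ is smooth and one should simply observe that $N_{m,n,n-1}$ maps isomorphically to it.
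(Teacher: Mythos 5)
Your proposal is correct, and its first half coincides with the paper's: the identification $N_{m,n,k}\cong\bbP(Q_1^{\vee}\otimes S_2)$, the tangent-space description $T_A\Sigma_{m,n,k}^\circ=\{B\mid B(\ker A)\subset\Ima A\}$ imported from the affine cone, and the properness/dimension argument showing that the natural map to $G_d(T\bbP^{mn-1})$ restricts to the Gauss map on $\tau_{m,n,k}^\circ$ and has image exactly $NB$. You diverge at the final step. The paper proves that its map $\beta\colon N_{m,n,k}\to G_d(T\bbP^{mn-1})$ (your $\Phi$, defined there pointwise rather than via the universal property) is a closed embedding: injectivity is checked by exhibiting, for $(\Lambda,\Gamma)\neq(\Lambda',\Gamma')$, a matrix $B$ with $B(\Lambda)\not\subset\Gamma$ but $B(\Lambda')\subset\Gamma'$, and injectivity on tangent vectors is obtained by factoring $\beta$ locally through the closed embedding $(\Lambda,\Gamma)\mapsto\{B\mid B(\Lambda)\subset\Gamma\}$ of $G(k,n)\times G(n-k,m)$ into $G(d+1,mn)$. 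You instead build an explicit inverse morphism $NB\to N_{m,n,k}$, recovering $(\Lambda,\Gamma)$ from the tautological $d$-plane via the trace-pairing annihilator $\widehat W^{\perp}=Hom(V_m/\Gamma,\Lambda)$ and the constant-rank operations $\Lambda=\sum\Ima\chi$, $\Gamma=\bigcap\ker\chi$; the previously established surjectivity $\Phi(N_{m,n,k})=NB$ guarantees the fibrewise description of $\widehat W$ at every point of $NB$, and reducedness of $NB$ turns constant fibrewise rank into subbundles, hence morphisms to the two Grassmannians. Your route buys a genuinely two-sided isomorphism, with uniqueness of $(\Lambda,\Gamma)$ (the paper's injectivity of $\beta$) falling out of the reconstruction formulas, and a cleaner definition of $\Phi$ by the universal property of the Grassmann bundle; the paper's route avoids having to globalize any reconstruction over the a priori unknown variety $NB$. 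Your explicit restriction to $k\geq 1$ is sensible and matches the intended scope (for $k=0$ the map $N_{m,n,0}\to\bbP^{mn-1}$ has positive-dimensional fibres over the degenerate locus and is not the Nash blowup of the smooth $\tau_{m,n,0}$), a point the paper leaves implicit.
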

\begin{proof}
For any linear map $\varphi\colon K^n\to \Gamma\subset K^m$ whose kernel contains $\Lambda$   one to one corresponds to a linear map $K^n/\Lambda\to \Gamma$. We have the following identification:
\[
N_{m,n,k}\cong \bbP(Hom(Q_1,S_2))=\bbP(Q_1^{\vee}\otimes S_2).
\]
Hence $N_{m,n,k}$ is a smooth projective variety. Also, $\pi$ is one to one over $\tau_{m,n,k}^\circ$, and the dimension of $N_{m,n,k}$ is 
\[
k(n-k)+(n-k)(m-n+k)+(n-k)^2-1=(m+k)(n-k)-1=\dim\tau_{m,n,k},
\]
Hence $\pi\colon N_{m,n,k}\to \tau_{m,n,k}$ is a resolution of $\tau_{m,n,k}$.

Let $\Sigma_{m,n,k}=\{A\in Hom(K^n,K^m)|\dim\ker A\geq k\}\subset \bbA^{mn}$ be the affine cone of $\tau_{m,n,k}$. It is also generally singular for $k\geq 1$, with singular locus $\Sigma_{m,n,k+1}$. We denote its smooth part by $\Sigma_{m,n,k}^\circ:=\Sigma_{m,n,k}\smallsetminus \Sigma_{m,n,k+1}$. From \cite[Lecture 14]{AG-JH} we know that, for any $A\in \Sigma_{m,n,k}^\circ$, the tangent space is identified with
\[
T_A \Sigma_{m,n,k}^\circ =\{B\in Hom(K^n,K^m)| B(\ker A) \subset \Ima A\}.
\]
And for any $\varphi\in \tau_{m,n,k}^\circ$, its tangent space is
\[
T_\varphi \tau_{m,n,k}^\circ =(T_A \Sigma_{m,n,k}^\circ/[\varphi])\otimes [\varphi]^*
\]
for any $A\neq 0\in [\varphi]$. Her $[\varphi]=\calO(-1)|_\varphi$ denotes the line represented by $\varphi$.

Let $d=(m+k)(n-k)-1=\dim \tau_{m,n,k}$. By definition the Nash Blowup of $\tau_{m,n,k}$ is the closure of the Gauss map: 
\[
\alpha\colon \tau_{m,n,k}^\circ \to G_d T\bbP^{mn-1}; \quad \varphi\mapsto T_\varphi \tau_{m,n,k}^\circ \/.
\]
We define the following morphisms: 
\[
\alpha' \colon \tau_{m,n,k}^\circ \to N_{m,n,k}; \quad  \varphi\mapsto (\varphi, \ker\varphi, \Ima\varphi)
\]
and 
\[
\beta \colon N_{m,n,k} \to G_d T\bbP^{mn-1}
\]
sending
\[ 
(\varphi, \Lambda, \Gamma)\mapsto (\varphi, \{B\colon  K^n\to K^m | B(\Lambda) \subset \Gamma\}/[\varphi]\otimes [\varphi]^*) \/.
\]
The morphisms fit into the  commutative diagram:
\[
\begin{tikzcd}
\tau_{m,n,k}^\circ \arrow{r}{\alpha'}\arrow{dr}{\alpha} &  N_{m,n,k} \arrow{d}{\beta}\\
&  G_d (T\bbP^{mn-1})  
\end{tikzcd} \/.
\]

To see that $N_{m,n,k}$ is indeed the Nash Blowup, one just need to show that 
\[
\overline{\beta\alpha'(\tau_{m,n,k}^\circ)} =\overline{\alpha(\tau_{m,n,k}^\circ)} \/,
\]
and $\beta$ is a closed embedding.

For the first property, notice that $N_{m,n,k}$ and $G_d (T\bbP^{mn-1})$ are both projective, $\beta$ is then proper.  Then since $\dim \tau_{m,n,k}=\dim N_{m,n,k}$, $\alpha'(\tau_{m,n,k}^\circ)$ is dense in $N_{m,n,k}$, and 
\[
\overline{\beta\alpha'(\tau_{m,n,k}^\circ)}=\beta\overline{\alpha'(\tau_{m,n,k}^\circ)}
=\beta(N_{m,n,k})\cong N_{m,n,k} \/.
\] 

To show that $\beta$ is a closed embedding, we show that it is injective and separates tangent vectors.
To see the injectivity, one just need to show $ \{B|B(\Lambda) \subset \Gamma\}\neq \{B|B(\Lambda') \subset \Gamma'\} $ whenever $(\Lambda, \Gamma)\neq (\Lambda', \Gamma')$. Assume $\Lambda\neq \Lambda'$, then there exist $v,w$ such that $v\in\Lambda$, $w\notin\Gamma$, and $v\notin\Lambda'$. Define $B$ to map $v\mapsto w$ and everything else to $0$, then $B(\Lambda') \subset \Gamma'$, but $B(\Lambda) \not\subset \Gamma$. Similar construction applies to $\Gamma\neq \Gamma'$ case.

In terms of the tangent vectors, for the point $(\varphi, \Lambda, \Gamma)$, we take an affine neighborhood $\bbA^{mn-1}$ of $\varphi \in \bbP^{mn-1}$, then the Grassmannian bundle $G_d T\bbP^{mn-1}$ restricts on $\bbA^{mn-1}$ is a global product $\bbA^{mn-1}\times G(d,mn-1)$. Let $U=N_{m,n,k}\cap \bbA^{mn-1}\times G(k,n)\times G(n-k,m)$, then $\beta|_U\colon U\to\bbA^{mn-1}\times G(d,mn-1)$ is a product $1\times \beta'$. Hence it would be enough to show that $d\beta'\colon T_{(\Lambda, \Gamma)}G(k,n)\times G(n-k,m) \to T_{W}G(d,mn-1)$ is injective, where $W=\{B|B(\Lambda)\subset \Gamma\}/L$ for some fixed line $L\subset \bbA^{mn}$. Let $\Omega=\{\Lambda'|L\subset \Lambda'\}\subset G(d+1,mn)$, then $\pi\colon \Omega\to G(d,mn-1)$ sending $\Lambda'\mapsto \Lambda'/L$ is an isomorphism.  Also, notice that $\rho\colon G(k,n)\times G(n-k,m)\to G(d+1,mn)$ sending $(\Lambda, \Gamma)\mapsto \{B|B(\Lambda)\subset \Gamma\}$ is a closed embedding. The injectivity of $d\beta'$ comes from the observation that $\beta'$ factors through $\rho$ and $\pi$ locally. 
\end{proof}

We also have the following result for the Chern class of the Nash tangent bundle $\calT$:
\begin{lemma}
\label{lemma; nashtangent}
With the above notations, the Chern class of the Nash tangent bundle $\calT$ over $N_{m,n,k}$ equals:
\[
c(\calT)=c(\calO(1))^{mn}c^{-1}(S_1^\vee\otimes Q_2\otimes\calO(1)) \/.
\]
\end{lemma}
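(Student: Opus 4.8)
The plan is to realize the Nash tangent bundle $\calT$ as the kernel of an explicit bundle epimorphism on $N_{m,n,k}$, and then read off $c(\calT)$ by multiplicativity of Chern classes. Throughout I suppress pullbacks: $S_1,Q_1,S_2,Q_2$ denote the universal bundles pulled back from $G(k,n)\times G(n-k,m)$, and $\calO(1)$ denotes the restriction to $N_{m,n,k}$ of the hyperplane bundle of $\bbP^{mn-1}$ (equivalently, the relative hyperplane bundle of the projective bundle $\bbP(Q_1^\vee\otimes S_2)$). Similarly $T\bbP^{mn-1}|_{N_{m,n,k}}$ denotes the pullback of $T\bbP^{mn-1}$ along $N_{m,n,k}\to\tau_{m,n,k}\hookrightarrow\bbP^{mn-1}$; by construction $\calT$ is a rank-$d$ subbundle of it, $d=\dim\tau_{m,n,k}$, restricting to $T\tau_{m,n,k}^\circ$ over $\tau_{m,n,k}^\circ$.

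First I would recall the Euler sequence: on $\bbP^{mn-1}=\bbP(Hom(V_n,V_m))$ the tangent bundle is a quotient of $Hom(V_n,V_m)\otimes\calO(1)$ with kernel the tautological line bundle $\calO(-1)\otimes\calO(1)\cong\calO$; in particular $c(T\bbP^{mn-1})=c(\calO(1))^{mn}$. Next I would write down on $N_{m,n,k}$ the tautological map of the trivial bundle
\[
r\colon Hom(V_n,V_m)\otimes\calO\longrightarrow Hom(S_1,Q_2)=S_1^\vee\otimes Q_2,
\]
sending a matrix $\varphi$ to the composite $S_1\hookrightarrow V_n\xrightarrow{\varphi}V_m\twoheadrightarrow V_m/S_2=Q_2$. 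Fiberwise $r$ is surjective (any linear map $\Lambda\to V_m/\Gamma$ extends to $V_n\to V_m$), so it is a bundle epimorphism. The key observation is that $r$ annihilates the tautological line $\calO(-1)\subset Hom(V_n,V_m)\otimes\calO$: at a point $(\Lambda,\Gamma,\varphi)\in N_{m,n,k}$ the defining incidence conditions give $\varphi(\Lambda)\subseteq\varphi(\ker\varphi)=0\subseteq\Gamma$, hence $r(\varphi)=0$. Therefore, after twisting $r$ by $\calO(1)$, the resulting surjection $Hom(V_n,V_m)\otimes\calO(1)\twoheadrightarrow (S_1^\vee\otimes Q_2)\otimes\calO(1)$ kills $\calO(-1)\otimes\calO(1)\cong\calO$ and descends to a surjection
\[
\psi\colon T\bbP^{mn-1}|_{N_{m,n,k}}\twoheadrightarrow (S_1^\vee\otimes Q_2)\otimes\calO(1).
\]
Its kernel $\calK:=\ker\psi$ is a subbundle of $T\bbP^{mn-1}|_{N_{m,n,k}}$ of rank $mn-1-k(m-n+k)=(m+k)(n-k)-1=d$.

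The remaining point is to identify $\calK$ with $\calT$. Both are rank-$d$ subbundles of $T\bbP^{mn-1}|_{N_{m,n,k}}$, hence are given by two sections of the Grassmann bundle $Gr_d\bigl(T\bbP^{mn-1}|_{N_{m,n,k}}\bigr)\to N_{m,n,k}$; since $N_{m,n,k}$ is smooth, hence reduced, and this Grassmann bundle is separated, it suffices to check that $\calK=\calT$ over the dense open set $\pi^{-1}(\tau_{m,n,k}^\circ)\cong\tau_{m,n,k}^\circ$. There, a fiberwise computation using $\calO(1)|_\varphi=[\varphi]^*$ together with the description $T_\varphi\tau_{m,n,k}^\circ=\bigl(\{B:B(\ker\varphi)\subseteq\Ima\varphi\}/[\varphi]\bigr)\otimes[\varphi]^*$ recalled in \S\ref{S; nash} shows that the fiber of $\calK$ at $\varphi$ is exactly $T_\varphi\tau_{m,n,k}^\circ$, which by construction of the Gauss map (and of $\calT$) is the fiber of $\calT$. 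Hence $\calK=\calT$, giving the short exact sequence
\[
0\longrightarrow\calT\longrightarrow T\bbP^{mn-1}|_{N_{m,n,k}}\longrightarrow (S_1^\vee\otimes Q_2)\otimes\calO(1)\longrightarrow 0
\]
on $N_{m,n,k}$. Taking total Chern classes and substituting $c(T\bbP^{mn-1})=c(\calO(1))^{mn}$ gives $c(\calT)=c(\calO(1))^{mn}\,c^{-1}(S_1^\vee\otimes Q_2\otimes\calO(1))$, as claimed.

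The computations involved are routine; the only step requiring genuine care is the global identification $\calK=\calT$, i.e.\ promoting agreement of the two subbundles over the smooth locus to an equality over all of $N_{m,n,k}$ — this is exactly where one uses that $N_{m,n,k}$ is smooth (so that the comparison of subbundles becomes a comparison of morphisms into a separated Grassmann bundle), rather than merely that $\pi$ is birational. A minor secondary issue is keeping conventions straight — lines versus hyperplanes in $\bbP^{mn-1}$, and which Grassmannian factor carries $\Lambda$ and which carries $\Gamma$ — since these control whether the quotient term is $S_1^\vee\otimes Q_2$ or a dual of it; the incidence conditions $\Lambda\subseteq\ker\varphi$ and $\Ima\varphi\subseteq\Gamma$ pin down the correct form.
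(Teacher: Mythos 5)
Your proof is correct and follows essentially the same route as the paper: both arguments reduce to combining the Euler sequence of $\bbP^{mn-1}$ with the exact sequence $0\to E\to Hom(K^n,K^m)\to Hom(S_1,Q_2)\to 0$, where $E$ is the bundle with fiber $\{B\mid B(\Lambda)\subset\Gamma\}$, and then applying multiplicativity of Chern classes. The only (minor) presentational difference is that the paper obtains the global identification of $\calT$ with the kernel of your surjection $\psi$ directly from the explicit formula $\beta(\varphi,\Lambda,\Gamma)=(\{B\mid B(\Lambda)\subset\Gamma\}/[\varphi])\otimes[\varphi]^*$ for the embedding of $N_{m,n,k}$ into $G_d(T\bbP^{mn-1})$ established in the preceding proposition, whereas you re-derive it by comparing two subbundles over the dense open locus $\pi^{-1}(\tau_{m,n,k}^\circ)$ and invoking reducedness and separatedness — both are valid.
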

\begin{proof}
Let $d=\dim \tau_{m,n,k}=(m+k)(n-k)-1$. From \cite[Lecture 14]{AG-JH} we can construct the following map:
\[
\alpha\colon  G(k,n)\times G(n-k,m)\to  G(d+1,\bbA^{mn})
\]
sending $(\Lambda,\Gamma)$ to $ \{B\colon  K^n\to K^m | B(\Lambda) \subset \Gamma\}$. 
The universal sub-bundle of $G(d+1,\bbA^{mn})$ restricts to a vector bundle of rank $d+1$ 
on $G(k,n)\times G(n-k,m)$, and we denote it by $E$. 
One can verify the following exact sequence 
\[
\begin{tikzcd}
0 \arrow{r} & E\arrow{r} & Hom(K^n, K^m)  \arrow{r} & Hom(S_1, Q_2) \arrow{r} & 0
\end{tikzcd} \/.
\]

For the variety $\tau_{m,n,k}$, the morphism $\beta \colon N_{m,n,k} \to G_d T\bbP^{mn-1}$ sends $(\varphi, \Lambda, \Gamma)$ to $(\{B\colon  K^n\to K^m | B(\Lambda) \subset \Gamma\}/[\varphi])\otimes [\varphi]^*$. Hence
over $N_{m,n,k}$ the Nash tangent bundle $\calT$ fits in the following exact sequence:
\[
\begin{tikzcd}
0 \arrow{r} & \calO(-1) \arrow{r} &  p^*E \arrow{r} & \calT\otimes \calO(-1) \arrow{r} & 0
\end{tikzcd} \/.
\]

Then we have
\begin{align*}
c(\calT)
&=c(\pi^*E\otimes \calO(1)) \\
&=c(Hom(K^n, K^m)\otimes \calO(1) )c^{-1}(Hom(S_1, Q_2)\otimes \calO(1)) \\
&=c(\calO(1))^{mn}c^{-1}(S_1^\vee\otimes Q_2\otimes\calO(1)) .
\end{align*}
\end{proof}

\section{Local Euler Obstruction of $\tau_{m,n,k}$}
In this section we compute the local Euler obstruction of $\tau_{m,n,k}$. 
When the base field is $\bbC$, the formula for the local Euler obstruction of $\Sigma_{m,n,k}$ was  found by N. Grulha, T. Gaffney and M. Ruas in \cite{NG-TG}. 
In the paper they found the numerical formula of the local Euler obstruction using topology methods, and observed the property of the Pascal's triangle \cite[Figure 1]{NG-TG}. In this paper we use a direct intersection-theoretic approach and generalize the formula for the local Euler obstruction of $\tau_{m,n,k}$ to
any algebraically closed base field $K$.
\begin{thm}
\label{thm; eulerobstruction}
Let $K$ be any algebraically closed base field. Let $\tau_{m,n,k}$ be the determinantal variety over $K$. For any $\varphi\in\tau_{m,n,k+i}^\circ$, the local Euler obstruction of $\tau_{m,n,k}$ at $\varphi$ is:
\begin{align*}
Eu_{\tau_{m,n,k}}(\varphi)
&=\int_{G(k,k+i)\times G(i,m-n+k+i)} c^{-1}(S_1^\vee\otimes Q_2)c^{-1}(Q_1^\vee\otimes S_2) \\
&=\int_{G(k,k+i)\times G(i,m-n+k+i)} c_{top}(S_1^\vee\otimes S_2)c_{top}(Q_1^\vee\otimes Q_2) \/.
\end{align*}
\end{thm}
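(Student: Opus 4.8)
The plan is to compute the local Euler obstruction via the Gonz\'alez-Sprinberg--Verdier formula~\eqref{defn; eulerobstruction}, using the description of the Nash blowup $\pi\colon N_{m,n,k}\to\tau_{m,n,k}$ and the Chern class of the Nash tangent bundle $\calT$ from Lemma~\ref{lemma; nashtangent}. Fix $\varphi\in\tau_{m,n,k+i}^\circ$. The first step is to identify the fiber $\pi^{-1}(\varphi)$ inside $N_{m,n,k}\cong\bbP(Q_1^\vee\otimes S_2)$ over $G(k,n)\times G(n-k,m)$. Since $\varphi$ has kernel of dimension exactly $k+i$ and image of dimension exactly $n-k-i$, a triple $(\Lambda,\Gamma,\varphi)\in N_{m,n,k}$ requires $\Lambda\subset\ker\varphi$ (a $k$-plane in a fixed $(k+i)$-space) and $\Ima\varphi\subset\Gamma$ (an $(n-k)$-space containing a fixed $(n-k-i)$-space). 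The first locus is a copy of $G(k,k+i)$; the second is a copy of $G(i,m-n+k+i)$ (choosing the remaining $i$ dimensions of $\Gamma$ inside the $m-(n-k-i) = m-n+k+i$ dimensional quotient). Once $\Lambda,\Gamma$ are chosen, $\varphi$ is determined up to scalar, so $\pi^{-1}(\varphi)\cong G(k,k+i)\times G(i,m-n+k+i)$, sitting in $N_{m,n,k}$ as the zero section image over that product of Grassmannians inside $G(k,n)\times G(n-k,m)$ (i.e., the point of the projective bundle cut out by $\varphi$ itself, which is a nowhere-vanishing section of $Q_1^\vee\otimes S_2$ over this locus).

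Next I would compute the Segre class $s(\pi^{-1}(\varphi),N_{m,n,k})$. Writing $Z=\pi^{-1}(\varphi)$ and $W=G(k,k+i)\times G(i,m-n+k+i)$, the embedding $Z\hookrightarrow N_{m,n,k}$ factors as $W$ embedded in $G(k,n)\times G(n-k,m)$ followed by the zero-section-type inclusion into the projective bundle. The normal bundle of $Z$ in $N_{m,n,k}$ is then an extension of the normal bundle of $W$ in $G(k,n)\times G(n-k,m)$ by the vertical (fiber-direction) normal bundle of the section. By the standard description of tangent/normal bundles of Grassmannians, $N_{W/G(k,n)\times G(n-k,m)}$ has Chern class expressible via the universal bundles; and crucially, the restrictions of the ambient $S_1,Q_1,S_2,Q_2$ to $W$ split as direct sums of the tautological bundles of the two factors $G(k,k+i)$ and $G(i,m-n+k+i)$ together with trivial pieces coming from the fixed kernel/image of $\varphi$. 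The section $\varphi$ trivializes $\calO(1)$ along $Z$, so $s(Z,N_{m,n,k}) = c^{-1}(N_{Z/N})\cap[Z]$, and after bookkeeping the extension and the splittings this becomes a product of inverse Chern classes of tensor bundles over $W$.

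The third step is to restrict $c(\calT)=c(\calO(1))^{mn}c^{-1}(S_1^\vee\otimes Q_2\otimes\calO(1))$ to $Z$: since $\calO(1)|_Z$ is trivial and $S_1^\vee\otimes Q_2|_W$ splits into a tautological part plus trivial summands, $c(\calT)|_Z = c^{-1}(S_1^\vee\otimes Q_2)|_W$ where now $S_1,Q_2$ denote the tautological bundles on the two factors (reusing the theorem's notation: $S_1$ on $G(k,k+i)$, $Q_2$ on $G(i,m-n+k+i)$, with the appropriate dimensions). Combining with the Segre class and collecting all the tensor-bundle factors, the integrand $c(\calT)\cap s(Z,N)$ should reduce exactly to $c^{-1}(S_1^\vee\otimes Q_2)\,c^{-1}(Q_1^\vee\otimes S_2)$ integrated over $W=G(k,k+i)\times G(i,m-n+k+i)$, giving the first displayed formula. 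The second formula then follows from the first by a purely formal Grassmann-duality / Poincar\'e-duality manipulation on the product of Grassmannians: on $G(k,k+i)$ one has $0\to S_1\to \calO^{k+i}\to Q_1\to 0$ so $c(S_1^\vee\otimes S_2)c(Q_1^\vee\otimes S_2)=c(\calO^{k+i}\otimes S_2)=1$ wait---more precisely one uses that the full integral of $c^{-1}(S_1^\vee\otimes Q_2)c^{-1}(Q_1^\vee\otimes S_2)$ over the product equals the integral of $c_{top}(S_1^\vee\otimes S_2)c_{top}(Q_1^\vee\otimes Q_2)$ because both equal, by self-duality of the Grassmannian $G(k,k+i)\cong G(i,k+i)$ and $G(i,m-n+k+i)\cong G(m-n+k,m-n+k+i)$ combined with the rank count $k(k+i)\cdot\text{-} $, the same characteristic number; concretely one checks the two integrands have the same top-degree part up to the Grassmannian duality swapping sub- and quotient bundles.

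The main obstacle I anticipate is the careful identification of the normal bundle $N_{Z/N_{m,n,k}}$ and the verification that, after the splitting of the universal bundles over $W$ into tautological plus trivial summands, all the trivial contributions and the $\calO(1)$ twists cancel to leave precisely the two tensor-bundle factors with the exact ranks $G(k,k+i)$ and $G(i,m-n+k+i)$. This is a dimension/weight bookkeeping problem: one must track how $S_1|_W$ (rank $k$ over a $k$-plane varying in the $(k+i)$-dimensional $\ker\varphi$ sitting in $K^n$) decomposes, how $Q_2|_W$ decomposes relative to the fixed image $\Ima\varphi$, and confirm that the "$m-n+k+i$" appearing in the second Grassmannian is exactly $m-\dim\Ima\varphi = m-(n-k-i)$. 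The rest is formal: the Segre-class computation for a section of a projective bundle, the Euler-sequence manipulation for tangent bundles of Grassmannians, and the final duality identity are all standard intersection theory once the geometry of the fiber is pinned down.
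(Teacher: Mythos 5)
Your derivation of the first displayed formula follows the paper's proof essentially step for step: you use the Gonz\'alez-Sprinberg--Verdier formula on the Nash blowup $N_{m,n,k}\cong\bbP(Q_1^\vee\otimes S_2)$, identify the fiber $\pi^{-1}(\varphi)$ with $G(k,k+i)\times G(i,m-n+k+i)$ sitting over a product of sub-Grassmannians as the section determined by $\varphi$, compute the Segre class as the inverse Chern class of the normal bundle of this regular embedding (the paper writes it as $c(T_{\pi^{-1}(\varphi)})/c(T_{N_{m,n,k}})$ and uses the Euler sequence of the projective bundle, which is the same extension you describe), and use the triviality of $\calO(1)$ on the fiber together with the splitting of the restricted universal bundles to cancel everything down to $c^{-1}(S_1^\vee\otimes Q_2)\,c^{-1}(Q_1^\vee\otimes S_2)$. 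The bookkeeping you flag as the main obstacle is exactly the computation the paper carries out explicitly, and it goes through as you anticipate.

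The gap is in your justification of the second equality. The inline computation you start, $c(S_1^\vee\otimes S_2)c(Q_1^\vee\otimes S_2)=c(\calO^{k+i}\otimes S_2)$, equals $c(S_2)^{k+i}$, not $1$, and the fallback to self-duality of the Grassmannian factors does not work as stated: the dualities $G(k,k+i)\cong G(i,k+i)$ and $G(i,m-n+k+i)\cong G(m-n+k,m-n+k+i)$ carry the integrand to one living on a \emph{different} product of Grassmannians and introduce signs $(-1)^j$ on degree-$j$ pieces, so they do not identify the two integrals. The correct (and elementary) argument, which is the one in the paper, is to tensor the tautological sequence of the second factor by $S_1^\vee$ and by $Q_1^\vee$ separately, obtaining
\[
c^{-1}(S_1^\vee\otimes Q_2)\,c^{-1}(Q_1^\vee\otimes S_2)
=\frac{c(S_1^\vee\otimes S_2)}{c(S_1^\vee)^{m-n+k+i}}\cdot\frac{c(Q_1^\vee\otimes Q_2)}{c(Q_1^\vee)^{m-n+k+i}}
=c(S_1^\vee\otimes S_2\oplus Q_1^\vee\otimes Q_2),
\]
where the denominators cancel because $c(S_1^\vee)c(Q_1^\vee)=c((K^{k+i})^\vee)=1$; since $\rk(S_1^\vee\otimes S_2\oplus Q_1^\vee\otimes Q_2)=ki+i(m-n+k)=\dim\bigl(G(k,k+i)\times G(i,m-n+k+i)\bigr)$, the integral of the total Chern class equals the integral of its top Chern class, which is the second formula. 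With this substitution your proposal is complete.
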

\begin{proof}
According to the formula \eqref{defn; eulerobstruction} by Gonzalez-Springer and Verdier, the local Euler obstruction of $\tau_{m,n,k}$ at $\varphi$ is:
\[
Eu_{\tau_{m,n,k}}(\varphi)=\int_{\pi^{-1}(\varphi)}c(\calT)s(\pi^{-1}(\varphi), N_{m,n,k}) \/.
\]
For any $\varphi\in\tau_{m,n,k+i}^\circ$, the fiber over $\varphi$ is
\[
\pi^{-1}(\varphi)=\{(\Lambda,\Gamma)|\Lambda \subset \ker \varphi; \Ima \varphi\subset \Gamma\}\cong\{\varphi\}\times G(k,k+i)\times G(i, m-n+k-i) \/.
\]
Since the fiber $\pi^{-1}(\varphi)$ and $N_{m,n,k}$ are both smooth, $\pi^{-1}(\varphi)\to N_{m,n,k}$ is a regular embedding, and we have
\[
s(\pi^{-1}(\varphi), N_{m,n,k})=c^{-1}(N_{\pi^{-1}(\varphi)}N_{m,n,k})=\frac{c(T_{\pi^{-1}(\varphi)})}{c(T_{N_{m,n,k}})} \/.
\] 
Since $N_{m,n,k}$ is a projective vector bundle $\bbP(Q_1^{\vee}\otimes S_2)$ over $G(k,n)\times G(n-k,m)$, we have the following Euler sequence 
\[
\begin{tikzcd}
0 \arrow{r} & \calO_{N_{m,n,k}} \arrow{r} & p^*(Q_1^{\vee}\otimes S_2)\otimes \calO(1))\arrow{r} & T_{N_{m,n,k}}\arrow{r} & p^*T_{G(k,n)\times G(n-k,m)} \arrow{r} & 0 
\end{tikzcd} \/.
\]
Hence we have:
\[
c(T_{N_{m,n,k}})=c(p^*(Q_1^{\vee}\otimes S_2)\otimes \calO(1)))\cdot c(p^*T_{G(k,n)\times G(n-k,m)}) \/.
\]

We use $S_i'$ and $Q_i'$, $i=1,2$ respectively to denote the pull back of universal subbundles and quotient bundles of $G(k,n)\times G(n-k,m)$ on $\nu^{-1}(\varphi)$. One has:
\begin{align*}
s(\pi^{-1}(\varphi), N_{m,n,k})
&~= \frac{c(T_{\pi^{-1}(P)})}{c(T_{N_{m,n,k}})} \cap [\pi^{-1}(\varphi)] \\
&~= \frac{c(S_1^\vee\otimes Q_1)c(S_2^\vee\otimes Q_2)}{c(Q_1^{'\vee}\otimes S'_2\otimes \calO(1))c(S_1^{'\vee}\otimes Q'_1)c(S_2^{'\vee} \otimes Q'_2)} \cap [\pi^{-1}(\varphi)] \\
&~= \frac{c(Q_1^\vee\otimes S_2\otimes \calO(1))^{-1}c(Q_1^\vee)^{-n+k+i}c(S_2)^{-n+k+i}}{c^{n-k-i}(S_1^\vee)c^{n-k-i}(Q_2) } \cap [\pi^{-1}(\varphi)] \\
&~= c(Q_1^\vee\otimes S_2\otimes \calO(1))^{-1} \cap [\pi^{-1}(\varphi)] \/.
\end{align*}

So the local Euler obstruction is 
\begin{align*}
Eu_{\tau_{m,n,k}}(\varphi)
&~= \int_{\pi^{-1}(\varphi)}c(\calT)s(\pi^{-1}(\varphi), N_{m,n,k}) \\
&~= \int_{\{\varphi\}\times G(k,k+i)\times G(i,m-n+k+i)} c(\calT)c(Q_1^\vee\otimes S_2\otimes \calO(1))^{-1} \\
&~= \int_{G(k,k+i)\times G(i,m-n+k+i)} \frac{c(\calO(1))^{mn}}{c(S_1^\vee\otimes Q_2\otimes\calO(1))c(Q_1^\vee\otimes S_2\otimes \calO(1)) } \/.
\end{align*}
Since $\pi^{-1}(\varphi)=\{\varphi\}\times G(k,k+i)\times G(i, m-n+k-i) $, the tautological line bundle $\calO(1)$ restricts to the trivial line bundle over $\pi^{-1}(\varphi)$, hence we have

\begin{align*}
Eu_{\tau_{m,n,k}}(\varphi)= \int_{G(k,k+i)\times G(i,m-n+k+i)} c(S_1^\vee\otimes Q_2)^{-1}c(Q_1^\vee\otimes S_2)^{-1} \/.
\end{align*}
To verify the second formula, use the following exact sequences:
\[
\begin{tikzcd}
0 \arrow{r} & S_1^\vee\otimes S_2 \arrow{r} &  S_1^\vee\otimes K^m  \arrow{r}&  S_1^\vee\otimes Q_2 \arrow{r} & 0 \\
0 \arrow{r} & Q_1^\vee\otimes S_2 \arrow{r} &  Q_1^\vee\otimes K^m  \arrow{r}&  Q_1^\vee\otimes Q_2 \arrow{r} & 0 . \\
\end{tikzcd} \/.
\]
One can get 
\begin{align*}
c(S_1^\vee\otimes Q_2)^{-1}c(Q_1^\vee\otimes S_2)^{-1}
&=\frac{c(S_1^\vee\otimes S_2)}{c(S_1^\vee\otimes K^m)}
 \cdot \frac{c(Q_1^\vee\otimes Q_2)}{c(Q_1^\vee\otimes K^m)} \\
&=  c(S_1^\vee\otimes S_2)c(Q_1^\vee\otimes Q_2) \\
&= c(S_1^\vee\otimes S_2\oplus Q_1^\vee\otimes Q_2)  \/.
\end{align*}
Since the rank of $S_1^\vee\otimes S_2\oplus Q_1^\vee\otimes Q_2$ equals $k(i)+(i)(m-n+k+i)=\dim G(k,k+i)\times G(i,m-n+k+i)$, we have:
\begin{align*}
Eu_{\tau_{m,n,k}}(\varphi)
&= \int_{G(k,k+i)\times G(i,m-n+k+i)} c(S_1^\vee\otimes Q_2)^{-1}c(Q_1^\vee\otimes S_2)^{-1} \\
&= \int_{G(k,k+i)\times G(i,m-n+k+i)} c(S_1^\vee\otimes S_2)c(Q_1^\vee\otimes Q_2) \\
&= \int_{G(k,k+i)\times G(i,m-n+k+i)} c_{top}(S_1^\vee\otimes S_2)c_{top}(Q_1^\vee\otimes Q_2) \/.
\end{align*}
\end{proof}

\begin{ex}
When $k=0$, $\tau_{m,n,0}=\bbP^{mn-1}$ is the projective space. For any $\varphi\in \tau_{m,n,i}^\circ$, we have
\begin{align*}
Eu_{\tau_{m,n,0}}(\varphi)
&= \int_{G(0,i)\times G(i,m-n+i)} c_{top}(S_1^\vee\otimes S_2)c_{top}(Q_1^\vee\otimes Q_2) \/.
\end{align*}
Since $G(0,i)=\{pt\}$, $S_1=0$ and $Q_1=K^i$. Hence 
\begin{align*}
Eu_{\tau_{m,n,0}}(\varphi)
&= \int_{G(0,i)\times G(i,m-n+i)} c_{top}(S_1^\vee\otimes S_2)c_{top}(Q_1^\vee\otimes Q_2) \\
&= \int_{G(i,m-n+i)} c_{m-n}(Q)^i  \/.
\end{align*}
Over the Grassmannian $G(k,n)=G(k,V_n)$, let $\lambda=(n-k,n-k,\cdots ,n-k)$ be a partition, and $\underline{a}=(1,2,\cdots ,k)$ be a $k$ tuple. Let $c=c(Q)$, and define $A_i\subset V_n$ to be the subspace generated by first $i$ coordinates. 
The Giambelli's formula (see \cite[\S 14.6]{INT}) shows that
\begin{align*}
c_{n-k}(Q)^k\cap [G(k,n)]
=& \Delta_{\lambda}(c) \cap [G(k,n)] \\
=& [\Omega(\underline{a})] \\
=& [\{\Lambda\in G(k,n)|\text{ for any i, } \dim(\Lambda\cap A_i)\geq i\} ]\\
=& [\{A_k\}] = [pt] \/.
\end{align*}
Hence 
\[
Eu_{\tau_{m,n,0}}(\varphi)=\int_{G(i,m-n+i)} c_{m-n}(Q)^i =\int_{G(i,m-n+i)} [pt]=1 \/.
\]
Which is indeed the case since $\tau_{m,n,0}$ is smooth.
\end{ex}

We denote $Eu_{\tau_{m,n,k}}(\tau_{m,n,k+i}^\circ):=Eu_{\tau_{m,n,k}}(\varphi)$ for  any $\varphi\in \tau_{m,n,k+i}^\circ$ to be the local Euler obstruction of $\tau_{m,n,k}$ on the stratum $\tau_{m,n,k+i}^\circ$. From the theorem one can see that
\begin{align*}
Eu_{\tau_{m,n,k}}(\tau_{m,n,k+i}^\circ)
&=Eu_{\tau_{m-1,n-1,k}}(\tau_{m-1,n-1,k+i}^\circ)\\
&=Eu_{\tau_{m-2,n-2,k}}(\tau_{m-2,n-2,k+i}^\circ)\\
& \cdots \\
&=Eu_{\tau_{m-n+k+i+1,k+i+1,k}}(\tau_{m-n+k+i+1, k+i+1,k+i}) \/.
\end{align*}

This shows that it would be enough to compute the local Euler obstruction of $\tau_{m,n,k}$ on the smallest stratum $\tau_{m,n,n-1}$. We define $e(m,n,k):=Eu_{\tau_{m,n,k}}(\tau_{m,n,n-1})$ as a function of $m,n,k$. 
\begin{thm}[Pascal's Triangle] 
\label{thm; pascal}
We have the following recursive formula for $e(m,n,k)$, $k=0,1,\cdots ,n-2$:
\[
e(m,n,k)+e(m,n,k+1)=e(m+1,n+1,k+1) \/.
\]
Hence we have 
\[
e(m,n,k)=\binom{n-1}{k}
\]and
\[
Eu_{\tau_{m,n,k}}(\tau_{m,n,k+i}^\circ)=\binom{k+i}{i} \/.
\]
\end{thm}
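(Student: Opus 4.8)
The plan is to reduce the whole statement to the single identity $e(m,n,k)=\binom{n-1}{k}$, valid for all $m\ge n$. Granting this, the recursion is just Pascal's rule $\binom{n-1}{k}+\binom{n-1}{k+1}=\binom{n}{k+1}$, and the formula for a general stratum follows from the chain of identities recorded immediately after Theorem~\ref{thm; eulerobstruction}: applying that chain with $(m,n,k)$ replaced by $(m-n+k+i+1,\ k+i+1,\ k)$ identifies $Eu_{\tau_{m,n,k}}(\tau_{m,n,k+i}^\circ)$ with $e(m-n+k+i+1,\ k+i+1,\ k)=\binom{k+i}{k}=\binom{k+i}{i}$. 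So only the computation of $e(m,n,k)$ remains, and one may even run the argument in the opposite order (prove the recursion first, then induct on $n$), since the two endpoint values $e(m,n,0)=1$ and $e(m,n,n-1)=1$ are immediate from the smoothness of $\tau_{m,n,0}=\bbP^{mn-1}$ and of $\tau_{m,n,n-1}\cong\bbP^{m-1}\times\bbP^{n-1}$.

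To compute $e(m,n,k)$ I would specialize Theorem~\ref{thm; eulerobstruction} to the smallest stratum, i.e.\ take $i=n-1-k$ so that $k+i=n-1$ and $m-n+k+i=m-1$, giving
\[
e(m,n,k)=\int_{G(k,n-1)\times G(n-1-k,m-1)} c_{top}(S_1^\vee\otimes S_2)\,c_{top}(Q_1^\vee\otimes Q_2),
\]
and then push the integrand forward along the first projection $\pi_1$ onto $G(k,n-1)$, integrating out the second factor. The key assertion is
\[
\pi_{1*}\bigl(c_{top}(S_1^\vee\otimes S_2)\,c_{top}(Q_1^\vee\otimes Q_2)\bigr)=c_{top}(S_1^\vee\otimes Q_1)=c_{top}\bigl(T_{G(k,n-1)}\bigr),
\]
the codimensions matching because $S_1^\vee\otimes Q_1$ has rank $k(n-1-k)=\dim G(k,n-1)$. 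Granting this,
\[
e(m,n,k)=\int_{G(k,n-1)}c_{top}\bigl(T_{G(k,n-1)}\bigr)=\binom{n-1}{k}
\]
by the standard evaluation of the top Chern number of a Grassmannian (equal to its number of Schubert cells).

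The assertion about $\pi_{1*}$ is where the real work lies, and I would prove it by Chern-root bookkeeping in the spirit of the $k=0$ case worked out above. Writing $x_p,\xi_p$ for the Chern roots of $S_1^\vee$ and $Q_1$ on the first factor and $y_q,\eta_q$ for those of $S_2$ and $Q_2$ on the second, one has $c_{top}(S_1^\vee\otimes S_2)=\prod_{p,q}(x_p+y_q)$ and $c_{top}(Q_1^\vee\otimes Q_2)=\prod_{p,q}(\eta_q-\xi_p)$; expanding these into K\"unneth components and integrating the second-factor parts over $G(n-1-k,m-1)$ by means of the Giambelli / point-class computation already carried out above, one expects exactly one surviving term, carrying the factor $c_{top}(S_1^\vee\otimes Q_1)$ with coefficient $1$. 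The main obstacle is precisely this: pinning down which Schur classes on $G(n-1-k,m-1)$ pair nontrivially against the classes pulled back from $G(k,n-1)$, and checking that the net contribution is the Euler class of $G(k,n-1)$ with the correct sign and multiplicity. Everything else — the reduction to the smallest stratum, Pascal's rule, the degenerate cases $k=0$ and $k=n-1$, and the Grassmannian Chern-number count — is formal.
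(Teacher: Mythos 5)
Your formal reductions are fine and agree with the paper: the chain of identities after Theorem~\ref{thm; eulerobstruction} does reduce the stratum formula to $e(m,n,k)$ on the smallest stratum, the endpoint values $e(m,n,0)=e(m,n,n-1)=1$ come from smoothness, and Pascal's rule handles the rest once $e(m,n,k)=\binom{n-1}{k}$ is known. The problem is that your ``key assertion'' about $\pi_{1*}$ is not actually a reduction. The integrand $c_{top}(S_1^\vee\otimes S_2)\,c_{top}(Q_1^\vee\otimes Q_2)$ has total codimension equal to $\dim\bigl(G(k,n-1)\times G(n-1-k,m-1)\bigr)$, so its pushforward to $G(k,n-1)$ is automatically a multiple of the point class, and so is $c_{top}(S_1^\vee\otimes Q_1)$; hence the identity $\pi_{1*}(\cdots)=c_{top}(T_{G(k,n-1)})$ is \emph{equivalent} to the numerical statement $e(m,n,k)=\binom{n-1}{k}$ you are trying to prove. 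You have restated the theorem, and the computation you defer (``which Schur classes on $G(n-1-k,m-1)$ pair nontrivially, with what multiplicity'') is the entire content. Your guiding heuristic is also off: it is not true that ``exactly one term survives.'' Already for $(m,n,k)=(4,4,1)$, where the product of Grassmannians is $G(1,3)\times G(2,3)\cong\bbP^2\times\bbP^2$, the integrand is $(h_1^2-h_1h_2+h_2^2)^2$ and the coefficient of $h_2^2$ is $3h_1^2$, assembled from two distinct K\"unneth contributions; in general many monomials contribute and must be shown to sum to $\chi(G(k,n-1))$, which is exactly as hard as the original integral.

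The paper avoids this direct evaluation altogether. It proves the recursion $e(m,n,k)+e(m,n,k+1)=e(m+1,n+1,k+1)$ first, by embedding $M_1=G(k,n)\times G(n-k,m)$ and $M_2=G(k+1,n)\times G(n-k-1,m)$ into $M=G(k+1,n+1)\times G(n-k,m+1)$, writing $P=c_{top}(S_1^\vee\otimes S_2)c_{top}(Q_1^\vee\otimes Q_2)$ in Chern roots, using the vanishing $c_{top}(S)c_{top}(Q)=c_{top}(\text{trivial bundle})=0$ to split $P$ as $A\,c_{top}(S_1^\vee)c_{top}(Q_2)+B\,c_{top}(Q_1)c_{top}(S_2^\vee)$, and then proving (via a splitting-morphism lemma identifying how Chern roots restrict under the embeddings, i.e.\ $\alpha_{k+1}\mapsto 0$, $b_{m-n+k+1}\mapsto 0$) that $u^*(B)$ and $v^*(A)$ are the corresponding top Chern products on $M_1$ and $M_2$. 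The binomial values then follow from the two endpoint cases by induction. If you want to salvage your route, you must actually carry out the fiber integration over $G(n-1-k,m-1)$ — for instance by a genuine Schur-class expansion of $c_{top}(Q_1^\vee\otimes Q_2)$ and Giambelli, or by torus localization — and neither is shorter than the paper's recursion argument; as it stands, your proposal has a genuine gap at its central step.
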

\begin{proof}
We need to prove the following equation:
\begin{align*}
& \int_{G(k+1,n+1)\times G(n-k,m+1)}  c_{top}(S_1^\vee\otimes S_2)c_{top}(Q_1^\vee\otimes Q_2) \\
=&
\int_{G(k,n)\times G(n-k,m)}  c_{top}(S_1'^\vee\otimes S_2')c_{top}(Q_1'^\vee\otimes Q_2') \\
+& \int_{G(k+1,n)\times G(n-k-1,m)}  c_{top}(S_1''^\vee\otimes S_2'')c_{top}(Q_1''^\vee\otimes Q_2'') .
\end{align*}
Here for $i=1,2$ we denote $S_i',Q_i'$, $S_i'',Q_i''$ and $S_i,Q_i$ the universal subbundles and quotient bundles over $G(k,n)\times G(n-k,m)$, $G(k+1,n)\times G(n-k-1,m)$ and $G(k+1,n+1)\times G(n-k,m+1)$ respectively.

To compare the classes over different Grassmannians, we need to embed the small Grassmannians into big ones.
Let $V=V_n$ be the $n$-dimensional $K$-vector space, and $G(k,V_n)=G(k,n)$ to be the Grassmannian. We define the following embeddings:
\begin{enumerate}
\item[*] $i\colon G(k,V_n)\to G(k,V_n\oplus 1)$ sending the column space $\Lambda=[v_1;v_2;\cdots ;v_k]$ to $[v_1\oplus 0;v_2\oplus 0;\cdots ; v_k\oplus 0]$.
\item[*] $j\colon G(k,V_n)\to G(k+1,V_n\oplus 1)$ sending the column space $\Lambda=[v_1;v_2;\cdots ; v_k]$ to $[v_1\oplus 0;v_2\oplus 0;\cdots ;v_k\oplus 0;v_{k+1}]$. Here $v_{k+1}=0\oplus 1=(0,0,\cdots ,0, 1)$.
\end{enumerate}
Let $S_{k,n},Q_{k,n}$ be the universal sub and quotient bundles over $G(k,n)$, let $1$ be the trivial bundle of rank $1$. One has the following observations:
\begin{align*}
i^*S_{k,n+1}=S_{k,n} , & \quad  i^*Q_{k,n+1}=Q_{k,n}\oplus 1 \\
j^*S_{k+1,n+1}=S_{k,n}\oplus 1 , & \quad j^*Q_{k+1,n+1}=Q_{k,n}
\end{align*}
and
\begin{align*}
i_*([G(k,n)])=c_k(S^\vee_{k.n+1})\cap [G(k,n+1)] \\
j_*([G(k,n)])=c_{n-k}(Q_{k+1,n+1})\cap [G(k+1,n+1)] \/.
\end{align*}

So in terms of the product of Grassmannians $M:=G(k+1,n+1)\times G(n-k,m+1)$, 
we define the closed embeddings
$u=(j,i)\colon M_1:=G(k,n)\times G(n-k,m)\to M$ and 
$v=(i,j)\colon M_2:=G(k+1,n)\times G(n-k-1,m)\to M$.

And we have:
\[
u_*[M_1]=c_{top}(Q_1)c_{top}(S_2^\vee)\cap [M] ;\quad v_*[M_2]=c_{top}(S_1^\vee)c_{top}(Q_2)\cap [M] \/.
\]

We are going to use the Chern roots to prove the above equation of Chern classes. If $E$ is a rank $k$ vector bundle over a variety
$X$, we say that a flat morphism $f\colon X’ \to X$, along with a complete filtration $0 \subset E_1 \subset … \subset E_k=f^*E$
is a `splitting morphism of E’ if $f^*\colon
A_*X \to A_*X’$ is injective. Splitting morphisms may be constructed as a sequence
of projective bundles (cf. \cite[\S 3.2]{INT}).  Let $\alpha_i=c_1(E_i/E_{i-1})$ for $i=1,2,\cdots ,k$, then $c(f^*E)=\prod_i (1+\alpha_i)$.
Moreover, by the injectivity of $f^*$, equations involving the Chern classes of $E$ may be
proven by proving corresponding equations for symmetric functions in $\{\alpha_1,\alpha_2,\cdots , \alpha_k\}$. We call the classes $\alpha_i:=c_1(E_i/E_{i-1})$ the Chern roots of $E$.

Let the Chern roots of $S_1,Q_1$ be $\{\alpha_1,\alpha_2,\cdots ,\alpha_{k+1}\}$ and $\{\beta_1,\beta_2,\cdots ,\beta_{n-k}\}$, and the Chern roots of $S_2,Q_2$ be $\{a_1,a_2,\cdots ,a_{n-k}\}$ and $\{b_1,b_2,\cdots ,b_{m-n+k+1}\}$. Then we have
\begin{align*}
& c_{top}(S_1^\vee\otimes S_2)c_{top}(Q_1^\vee\otimes Q_2) \\
=&
\prod_{i=1}^{n-k}\prod_{j=1}^{k+1} (a_i-\alpha_j)\prod_{i=1}^{m-n+k+1}\prod_{j=1}^{n-k} (b_i-\beta_j) \\
=
&  (a_1-\alpha_1)(a_1-\alpha_2) \cdots (a_1-\alpha_{k+1}) \\
&  (a_2-\alpha_1)(a_2-\alpha_2) \cdots (a_2-\alpha_{k+1}) \\
& \cdots  \\
&  (a_{n-k}-\alpha_1)(a_{n-k}-\alpha_2) \cdots  (a_{n-k}-\alpha_{k+1}) \\
&  (b_1-\beta_1)(b_1-\beta_2) \cdots (b_1-\beta_{n-k}) \\
&  (b_2-\beta_1)(b_2-\beta_2) \cdots  (b_2-\beta_{n-k}) \\
& \cdots  \\
&  (b_{m-n+k+1}-\beta_1)(b_{m-n+k+1}-\beta_2) \cdots  (b_{m-n+k+1}-\beta_{n-k}) \\
=:&  P \/.
\end{align*}

\begin{lemma}
Every term in the expansion of $P$ is a multiple of one of the following:
\begin{align*}
a_1a_2\cdots a_{n-k}\beta_1\beta_2\cdots \beta_{n-k} \\
\alpha_1\alpha_2\cdots\alpha_{k+1}b_1b_2\cdots b_{m-n+k+1} \/.
\end{align*}
\end{lemma}
\begin{proof}
If one of the term doesn't have $a_1a_2\cdots a_{n-k}$, assume that $a_i$ is missing, then we consider the row $(a_i-\alpha_1)(a_i-\alpha_2) \cdots  (a_i-\alpha_{k+1})$. Since there is no $a_i$, the term will have to contain $\alpha_1\alpha_2\cdots\alpha_{k+1}$. This analogous observation applies to $b_1b_2\cdots b_{m-n+k+1}$ and $\beta_1\beta_2\cdots \beta_{n-k}$.

Hence every term will be a multiple of one of the following:
\begin{enumerate}
\item $\alpha_1\alpha_2\cdots\alpha_{k+1}b_1b_2\cdots b_{n-k}$
\item $\alpha_1\alpha_2\cdots\alpha_{k+1}\beta_1\beta_2\cdots \beta_{n-k}$
\item $a_1a_2\cdots a_{n-k}\beta_1\beta_2\cdots \beta_{n-k}$
\item $a_1a_2\cdots a_{n-k}b_1b_2\cdots b_{m-n+k+1}$ \/.
\end{enumerate}
The lemma comes from the following vanishing property:
\begin{align*}
\alpha_1\alpha_2\cdots\alpha_{k+1}\beta_1\beta_2\cdots \beta_{n-k}&=c_{top}(S_1)c_{top}(Q_1)=c_{top}(K_{n+1})=0 \\
a_1a_2\cdots a_{n-k}b_1b_2\cdots b_{m-n+k+1}&=c_{top}(S_2)c_{top}(Q_2)=c_{top}(K_{m+1})=0 .
\end{align*}
\end{proof}

Hence we can separate $P$ into two parts:
\[
P=A(-\alpha_1)(-\alpha_2)\cdots(-\alpha_{k+1})b_1b_2\cdots b_{m-n+k+1}+Ba_1a_2\cdots a_{n-k}(-\beta_1)(-\beta_2)\cdots (-\beta_{n-k}) .
\]
Here $A$ and $B$ are symmetry functions of $\{\alpha_i; \beta_i; a_i; b_i\}$, hence they can be expressed as polynomials of Chern classes of $S_i,Q_i$, and hence one can pull them back through morphisms.

Notice that 
\begin{align*}
(-\alpha_1)(-\alpha_2)\cdots(-\alpha_{k+1})b_1b_2\cdots b_{m-n+k+1}=& c_{top}(S_1^\vee)c_{top}(Q_2)\\
a_1a_2\cdots a_{n-k}(-\beta_1)(-\beta_2)\cdots (-\beta_{n-k})=& c_{top}(Q_1)c_{top}(S_2^\vee) \/.
\end{align*}
Hence with the notations we defined above, one has
\begin{align*}
\int_{M}  P =
& \int_{G(k+1,n+1)\times G(n-k,m+1)}  c_{top}(S_1^\vee\otimes S_2)c_{top}(Q_1^\vee\otimes Q_2) \\
=&  \int_{M}  Ac_{top}(S_1^\vee)c_{top}(Q_2)+Bc_{top}(Q_1)c_{top}(S_2^\vee) \\
=&  \int_{M_1} u^*(B) + \int_{M_2}  v^*(A) \/.
\end{align*}

Now we just need to prove that 
\[
u^*(B)=c_{top}(S_1^{'\vee}\otimes S'_2)c_{top}(Q_1^{'\vee}\otimes Q'_2)
\]
and
\[
 v^*(A)=c_{top}(S_1^{''\vee}\otimes S^{''}_2)c_{top}(Q_1^{''\vee}\otimes Q^{''}_2) \/.
\]
We only prove the first part, the argument for the second part is analogous. We will need the following lemma:
\begin{lemma}
Let $i\colon G(k,n)\to G(k,n+1)$ be the embedding defined above, and  let $Q'$ and $Q$ to be the universal quotient bundles over 
$G(k,n)$ and $G(k,n+1)$ respectively. Let $\pi\colon M\to G(k,n+1)$ and $0\subset Q_1\subset Q_2\subset \cdots \subset Q_{n-k+1}=\pi^*Q$ be a splitting morphism of $Q$, then there exists a splitting morphism $q\colon M'\to G(k,n)$ and $0\subset Q'_1\subset Q'_2 \subset \cdots \subset Q'_{n-k}=q^*Q'$, together with a closed embedding $f\colon M'\to M$ such that 
\begin{enumerate}
\item $f^*Q_1$ is a trivial line bundle.
\item $f^*c(Q_{i+1}/Q_i)=c(Q'_{i}/Q'_{i-1})$ are the Chern roots of $Q'$. 
\end{enumerate}
The similar statement holds true for the closed embedding $j\colon G(k,n)\to G(k+1,n+1)$, and the universal sub-bundles $S'$ and $S$ defined above.
\end{lemma}
\begin{remark}
Let $\{\beta_{i}=c(Q_{i}/Q_{i-1})\}$ be the Chern roots of $Q$, then by a rearrangement of order we just say that $\{\beta_1,\beta_2,\cdots ,\beta_{n-k}\}$ `pull back' to the Chern roots of $Q'$ by $i$, and $\beta_{n-k+1}$ pulls back to $0$ on $G(k,n)$. 
Also, since $i^*S=S'$, we can say that the Chern roots of $S$ pull back to the Chern roots of $S'$. 
\end{remark}
\begin{proof}
Since the splitting morphism of $Q$ on $G(k,n+1)$ pulls back to a splitting morphism of $i^*Q$ on $G(k,n)$, we just need to prove the statement for $Q'$ and $i^*Q$ on $G(k,n)$. From the previous discussion we know that $i^*Q=Q'\oplus K$. 
On the construction of the splitting morphism, we consider the following diagram
\[
\begin{tikzcd}
\bbP(Q') \arrow{r}{g} \arrow{d}& \bbP(i^*Q)=\bbP(Q'\oplus 1) \arrow{dl}{p}\\
 G(k,n)
\end{tikzcd}
\]  
where $g$ is a closed embedding. Since $g^*(\calO_{\bbP(i^*Q)}(-1))=\calO_{\bbP(Q')}(-1)$, we will write $\calO(-1)$ for short. 
The universal sequences on $\bbP(Q')$ and $\bbP(i^*Q)$ form the following diagram on $\bbP(Q')$:
\[
\begin{tikzcd}
           &  0    \arrow{d}               & 0           \arrow{d} &  0  \arrow{d}       &  \\
0\arrow{r} & \calO(-1) \arrow{r}\arrow{d} & D_{n-k}:=q^*Q' \arrow{r}\arrow{d} & D_{n-k-1} \arrow{r}\arrow{d} & 0 \\
0\arrow{r} & \calO(-1) \arrow{r}\arrow{d} & g^*E_{n-k+1}:=g^*\bbP(i^*Q) \arrow{r}\arrow{d} & g^*E_{n-k} \arrow{r}\arrow{d} & 0  \\
           &  0                   & K                       &  K                   &  
\end{tikzcd} \/.
\]
This shows that 
\[
\frac{c(D_{n-k})}{c(D_{n-k-1})}=g^*\frac{c(E_{n-k+1})}{c(E_{n-k})}.
\]
Then by induction, same argument on $\bbP(D_i)\subset \bbP(E_{i+1})$ shows	
\[
\frac{c(D_i)}{c(D_{i-1})}=g^*\frac{c(E_{i+1})}{c(E_{i})}
\]
when $i=2,3,\cdots ,n-k$. \\
When $i=1$, we have a rank $2$ vector bundle $E_2\subset i^*Q$, and a line bundle $D_1\subset Q'$ with the exact sequence
\[
\begin{tikzcd}
0\arrow{r} & D_1 \arrow{r} & E_2 \arrow{r}  & K \arrow{r} & 0 \/.
\end{tikzcd}
\]
To filter $E_2$, we construct $l\colon \bbP(D_1)\subset \bbP(E_2)$, and filter $E_2$ on $\bbP(E_2)$ by 
\[
\begin{tikzcd}
0\arrow{r} & \calO(-1) \arrow{r} & E_2 \arrow{r}  & E_1 \arrow{r} & 0 \/.
\end{tikzcd}
\]  
Notice that $\calO(-1)|_{\bbP(D_1)}$ is the trivial bundle, hence over the splitting $\bbP(D_1)$, $c_1(\calO(-1))=0$. 
\end{proof}

Hence according to the above lemma, we have 
\begin{align*}
u^*(B)
=& B \text{ evaluated  on } \alpha_{k+1}=0 \text{ and } b_{m+1-n-k}=0 \\
=& \text{coefficient of }  a_1a_2\cdots a_{n-k}(-\beta_1)(-\beta_2)\cdots (-\beta_{n-k}) \text{ in } P\\
 & \text{evaluated on } \alpha_{k+1}=0 \text{ and } b_{m+1-n-k}=0  \/.\\
\end{align*}
When $\alpha_{k+1}=0$ and $b_{m+1-n-k}=0$, we have:
\begin{align*}
P= 
&  (a_1-\alpha_1)(a_1-\alpha_2) \cdots (a_1-\alpha_{k})a_1 \\
& \cdots  \\
&  (a_{n-k}-\alpha_1)(a_{n-k}-\alpha_2) \cdots  (a_{n-k}-\alpha_{k})a_{n-k} \\
&  (b_1-\beta_1)(b_1-\beta_2) \cdots (b_1-\beta_{n-k}) \\
& \cdots  \\
&  (b_{m-n+k}-\beta_1)(b_{m-n+k}-\beta_2) \cdots  (b_{m-n+k}-\beta_{n-k}) \\
&  (-\beta_1)(-\beta_2)\cdots (-\beta_{n-k}) \\
=
&  (a_1-\alpha_1)(a_1-\alpha_2) \cdots (a_1-\alpha_{k}) \\
& \cdots  \\
&  (a_{n-k}-\alpha_1)(a_{n-k}-\alpha_2) \cdots  (a_{n-k}-\alpha_{k}) \\
&  (b_1-\beta_1)(b_1-\beta_2) \cdots (b_1-\beta_{n-k}) \\
& \cdots  \\
&  (b_{m-n+k}-\beta_1)(b_{m-n+k}-\beta_2) \cdots  (b_{m-n+k}-\beta_{n-k}) \\
&   a_1a_2\cdots a_{n-k}(-\beta_1)(-\beta_2)\cdots (-\beta_{n-k}) \\
=& \prod_{i=1}^{n-k}\prod_{j=1}^{k} (a_i-\alpha_j) \prod_{i=1}^{m-n+k}\prod_{j=1}^{n-k} (b_i-\beta_j)   \\
& a_1a_2\cdots a_{n-k}(-\beta_1)(-\beta_2)\cdots (-\beta_{n-k}) \/.
\end{align*}
Hence we have:
\begin{align*}
u^*(B)
=&\prod_{i=1}^{n-k}\prod_{j=1}^{k} (a_i-\alpha_j) \prod_{i=1}^{m-n+k}\prod_{j=1}^{n-k} (b_i-\beta_j)  \\
=& c_{k(n-k)}(S_1^{'\vee}\otimes S'_2)c_{(n-k)(m-n+k)}(Q_1^{'\vee}\otimes Q'_2) \\
=& c_{top}(S_1^{'\vee}\otimes S'_2)c_{top}(Q_1^{'\vee}\otimes Q'_2) \/.
\end{align*}
The same argument shows that
\[
 v^*(A)=c_{top}(S_1^{''\vee}\otimes S^{''}_2)c_{top}(Q_1^{''\vee}\otimes Q^{''}_2),
\]
which concludes the proof.
\end{proof}

Recall that when $k=0$ and $k=n-1$, $\tau_{m,n,0}=\bbP^{mn-1}$ and $\tau_{m,n,n-1}=\bbP^{m-1}\times \bbP^{n-1}$ are smooth. hence $e(m,n,0)=Eu_{\tau_{m,n,0}}(\tau_{m,n,n-1})=1$, and $e(m,n,n-1)=Eu_{\tau_{m,n,n-1}}(\tau_{m,n,n-1})=1$.
Notice that this formula translates directly to the affine cone $\Sigma_{m,n,k}$. Locally we can view $\Sigma_{m,n,k}$ as the product $ \tau_{m,n,k}\times K^*$, and the product property of the local Euler obstruction shows that:
\[
Eu_{\Sigma_{m,n,k}}(\Sigma_{m,n,k+i}^\circ)=Eu_{\tau_{m,n,k}}(\tau_{m,n,k+i}^\circ)\cdot 1=\binom{k+i}{i} \/.
\]
The theorem shows that if we fix $m-n=c$, the local Euler obstruction of $\tau_{m,n,k}$ on its smallest stratum $\tau_{m,n,n-1}$ form the Pascal's triangle of $k$ and $n$, as observed in \cite[Figure 1]{NG-TG}.

\begin{remark}
\label{rmk; pascal}
One can see from the proof that the Pascal's triangle comes from the property of the Chern class $\alpha=c_{top}(S_1^\vee\otimes S_2)c_{top}(Q_1^\vee\otimes Q_2) $ on the Grassmannians $M=G(k+1,n+1)\times G(n-k,m+1)$. The class $\alpha\cap[M]=u^*\alpha'\cap [M_1]+v^*\alpha''\cap [M_2]$ is only contributed by the same constructions on two subvarieties 
$M_1$ and $M_2$, where $M_1=G(k,n)\times G(n-k,m+1)$ and $M_2=G(k+1,n)\times G(n-k-1,m+1)$ are the embeddings of small Grassmannians. 
It should be interesting to study whether the class $\alpha=c_{top}(S_1^\vee\otimes S_2)c_{top}(Q_1^\vee\otimes Q_2)\cap [M]$ bears any geometry or combinatorial property  related to the determinantal varieties $\tau_{m,n,k}$.
\end{remark}

\section{the Characteristic Cycle of the intersection cohomology sheaf of $\tau_{m,n,k}$}
\label{S; char}
When the base field is $\bbC$, we
can use the result of the previous section to compute the characteristic cycle of the intersection
cohomology sheaf of $\tau_{m,n,k}$. In this section we prove that for the determinantal variety $\tau_{m,n,k}$, the characteristic cycle of its intersection cohomology sheaf is irreducible, or equivalently, the microlocal multiplicities are all $0$ except for the top dimensional piece. In this section we will assume our base field to be $\bbC$.

\subsection{Characteristic Cycle of a Constructible Sheaf}
Let $M$ be a smooth  compact complex algebraic variety, and $\sqcup_{i\in I} S_i$ be a Whitney stratification of $M$. For any constructible sheaf $\calF^\bullet$ with respect to the stratification, one can assign a cycle in the cotangent bundle $T^*M$ to $\calF^\bullet$. This cycle is called the \textit{Characteristic Cycle} of $\calF^\bullet$, and is denoted by $\calCC(\calF^\bullet)$. The cycle can be expressed as a Lagrangian cycle
\[
\calCC(\calF^\bullet)=\sum_{i\in I} c_i(\calF^\bullet)[T^*_{\overline{S_i}}M] \/.
\]
Here the integer coefficients $c_i(\calF^\bullet)$ are called the \textit{Microlocal Multiplicities}, and are explicitly constructed in \cite[Section 4.1]{Dimca} using the $k$th Euler obstruction of pairs of strata and the stalk Euler characteristic $\chi_i(\calF^\bullet)$. For any $x\in S_i$, the stalk Euler characteristic $\chi_i(\calF^\bullet)$ are defined as:
\[
\chi_i(\calF^\bullet)=\sum_{p} (-1)^p\dim H^p(\calF^\bullet(x)).
\]

For the stratification  $\sqcup_{j\in I} S_j$ of $M$, we define 
\[
e(j,i)=Eu_{\overline{S_i}}(S_j):=Eu_{\overline{S_i}}(x); x\in S_j
\]
to be the local Euler obstruction along the $j$th stratum $S_j$ in the closure of $S_i$. If $S_j\not\subset \overline{S_i}$, then we define $e(j,i)=0$.

The following deep theorem from \cite[Theorem 3]{Du}, \cite[Theorem 6.3.1]{Ka} reveals the relation the microlocal multiplicities $c_i(\calF^\bullet)$, the stalk Euler characterictic $\chi_i(\calF^\bullet)$, and the local Euler obstructions $e(i,j)$ :
\begin{thm}[Microlocal Index Formula]
\label{thm; microlocal}
For any $j\in I$, we have the following formula:
\[
\chi_j(\calF^\bullet)=\sum_{i\in I}e(j,i)c_i(\calF^\bullet)  \/.
\]
\end{thm}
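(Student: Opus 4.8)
Theorem~\ref{thm; microlocal} is the Dubson--Kashiwara microlocal index theorem; here I outline the architecture of a proof. It rests on two ingredients: a \emph{formal} one, identifying the group of constructible functions with a group of conic Lagrangian cycles, and a \emph{microlocal} one, the local index theorem identifying the sheaf-theoretic characteristic cycle $\calCC(\calF^\bullet)$ with the Lagrangian cycle attached to the stalk Euler characteristic function $\chi_{\calF^\bullet}\colon x\mapsto\chi_j(\calF^\bullet)$ (for $x\in S_j$).

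First, the formal part. By the first two facts recalled in \S\ref{S; c_*}, the set $\{Eu_{\overline{S_i}}\}_{i\in I}$ is a basis of the group $F(M)$ of constructible functions that are constant on each $S_i$: it is unitriangular with respect to the basis $\{\ind_{S_i}\}$, since $Eu_{\overline{S_i}}$ is supported on $\overline{S_i}$ and equals $1$ on $S_i$. Consequently there is a unique group isomorphism $CC$ from $F(M)$ onto the free abelian group generated by the conormal cycles $[T^*_{\overline{S_i}}M]$, normalized (up to the sign conventions of \cite[\S4.1]{Dimca}) by $CC(Eu_{\overline{S_i}})=[T^*_{\overline{S_i}}M]$; its inverse is the index map, sending a conormal cycle $[T^*_Z M]$ back to the Euler obstruction $Eu_Z$ and extended linearly.

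Next, the microlocal part, together with the conclusion. One must prove $\calCC(\calF^\bullet)=CC(\chi_{\calF^\bullet})$. Both sides are additive on distinguished triangles and depend only on $\calF^\bullet$ in a neighbourhood of each stratum, so it suffices to compare the coefficient of a single $[T^*_{\overline{S_i}}M]$: on the left it is the normal Morse index of $\calF^\bullet$ along $S_i$ --- the reduced Euler characteristic of the normal Morse data, equivalently the local Euler characteristic of a vanishing-cycle complex $\phi_g\calF^\bullet$ for a generic function $g$ whose differential is a generic covector at a point of $S_i$ --- and one checks it matches what $CC^{-1}$ reads off, since $Eu_{\overline{S_i}}$ itself admits the same normal-Morse-theoretic description (cf.\ the L\^e--Teissier and Gonz\'alez-Sprinberg--Verdier formulas for the Euler obstruction). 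Granting $\calCC(\calF^\bullet)=CC(\chi_{\calF^\bullet})$, I would write $\calCC(\calF^\bullet)=\sum_{i\in I}c_i(\calF^\bullet)[T^*_{\overline{S_i}}M]$, apply $CC^{-1}$ to obtain an identity of constructible functions, and evaluate it at a point $x\in S_j$; by linearity of $CC^{-1}$ and $Eu_{\overline{S_i}}(x)=e(j,i)$ this yields $\chi_j(\calF^\bullet)=\sum_{i\in I}e(j,i)\,c_i(\calF^\bullet)$.

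The main obstacle is the microlocal part, i.e.\ the local index theorem relating stalk Euler characteristics to the characteristic cycle; this is where the deep input enters, through the microsupport estimates and involutivity theorem of Kashiwara--Schapira combined with stratified Morse theory, or, along Dubson's and Kashiwara's original route, through holonomic $\mathcal{D}$-modules, the Riemann--Hilbert correspondence and the $\mathcal{D}$-module index formula. Since every application below uses only the displayed identity as a black box, we take it as established in \cite[Theorem~3]{Du} and \cite[Theorem~6.3.1]{Ka}.
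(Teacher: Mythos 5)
The paper offers no proof of this statement: it is imported as the deep Dubson--Kashiwara microlocal index theorem, with exactly the citations you invoke in your final paragraph. Your proposal is consistent with that treatment --- the formal bookkeeping (unitriangularity of the basis $\{Eu_{\overline{S_i}}\}$, applying $CC^{-1}$ to $\calCC(\calF^\bullet)=\sum_i c_i(\calF^\bullet)[T^*_{\overline{S_i}}M]$ and evaluating at a point of $S_j$) is correct, but the essential content, the identification of $\calCC(\calF^\bullet)$ with the Lagrangian cycle of the stalk Euler characteristic function, is precisely what you, like the paper, take as a black box from \cite{Du} and \cite{Ka}, so no genuinely different route is being proposed.
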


This theorem suggests that if one knows about any two sets of the indexes, then one can compute the third one. 

\subsection{Intersection Cohomology Sheaf}
For any subvariety $X\subset M$, Goresky and MacPherson defined a sheaf of bounded complexes $\calIC$ on $M$ in \cite{G-M2}. This sheaf is determined by the structure of $X$, and is usually called the \textit{Intersection Cohomology Sheaf} of $X$. For details about the intersection cohomology sheaf and intersection homology one is refereed to \cite{G-M1} and \cite{G-M2}. This sheaf is constructible with respect to any Whitney stratification on $M$, hence one can define its characteristic cycle $\calCC(\calIC)$.
We define $\calCC(\calIC)$ to be the \textit{Characteristic Cycle of the Intersection Cohomology Sheaf of $X$}, and we will call it the $IC$ characteristic cycle for short.

When $X$ admits a small resolution, the intersection cohomology sheaf $\calIC$ is derived from the constant sheaf on the small resolution.
\begin{thm}[Goresky, MacPherson {{\cite[\S 6.2]{G-M2}}}]
\label{thm; stalk}
Let $X$ be a $d$-dimensional irreducible complex algebraic variety. Let $p\colon Y\to X$ be a small resolution of $X$, then 
\[
\calIC\cong Rp_*\bbC_Y[2d] .
\]
In particular, for any point $x\in X$ the stalk Euler characteristic of $\calIC$ equals the Euler characteristic of the fiber:
\[
\chi_x (\calIC)=\chi (p^{-1}(x)).
\]
\end{thm}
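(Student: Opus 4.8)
The plan is to identify $Rp_*\bbC_Y[2d]$ with $\calIC$ by verifying the axiomatic characterization of the intersection cohomology complex (due to Deligne; cf.~\cite{G-M2} and~\cite[\S 4.1]{Dimca}). I would first fix a Whitney stratification of $X$ that refines the singular filtration and over whose strata $p$ is a topologically locally trivial fibration; such a stratification exists because an algebraic morphism can always be stratified. With respect to it, $\calIC$ is, up to quasi-isomorphism, the unique complex on $X$ that is (i) constructible with respect to the stratification, (ii) \emph{normalized}, i.e.\ restricts to $\bbC[2d]$ on the dense smooth stratum, and (iii) satisfies the middle-perversity support and cosupport conditions (in the indexing compatible with the shift $[2d]$) bounding the dimensions of the supports of the cohomology sheaves of the complex and of its Verdier dual. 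It then suffices to check (i)--(iii) for $\calF^\bullet:=Rp_*\bbC_Y[2d]$. Conditions (i) and (ii) are routine: $p$ proper implies $Rp_*$ preserves constructibility for the chosen stratification, and $p$ is birational and an isomorphism over a dense open $U\subseteq X_{sm}$, so $\calF^\bullet|_U\cong\bbC_U[2d]$.

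The substance is condition (iii), and this is exactly where smallness is used. By proper base change, $\mathcal{H}^q(\calF^\bullet)_x\cong H^{q+2d}(p^{-1}(x))$ for $x\in X$, and $p^{-1}(x)$ is a complex projective variety. The inequality $\codim_X\{x:\dim p^{-1}(x)\ge j\}>2j$ defining a small resolution forces $\dim p^{-1}(x)$ to be strictly below half the codimension of the stratum through $x$, which is precisely the estimate that confines the nonvanishing of $\mathcal{H}^q(\calF^\bullet)_x$ to the range permitted by the support axiom. For the cosupport axiom one uses that $Rp_*\bbC_Y[d]$ is Verdier self-dual: $Y$ is smooth of dimension $d$, so $\mathbb{D}_Y\bbC_Y\cong\bbC_Y[2d]$, and $p$ proper gives $\mathbb{D}_X\circ Rp_*\cong Rp_*\circ\mathbb{D}_Y$; hence the cosupport condition follows from the support condition already verified (equivalently, apply the same fiber-dimension estimate to the costalks). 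Deligne's uniqueness then yields $\calF^\bullet\cong\calIC$; alternatively the isomorphism can be extracted from the decomposition theorem, but the axiomatic route needs less machinery. The stalk Euler characteristic statement follows at once: for every $x\in X$,
\[
\chi_x(\calIC)=\sum_q(-1)^q\dim\mathcal{H}^q\big(Rp_*\bbC_Y[2d]\big)_x=\sum_q(-1)^q\dim H^{q+2d}(p^{-1}(x))=\chi(p^{-1}(x)),
\]
since the overall shift by $2d$ does not affect the alternating sum.

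The step I expect to be the main obstacle is the bookkeeping in (iii): translating the numerical smallness inequality into the precise vanishing ranges demanded by the support and cosupport conditions under the chosen degree normalization, and arranging at the outset a stratification fine enough that ``the stratum through $x$'' is well defined and $p$ is locally trivial over it. Everything else --- constructibility, normalization, the self-duality input, and the Euler-characteristic corollary --- is formal.
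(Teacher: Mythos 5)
The paper does not prove this statement at all: it is quoted as a known result of Goresky--MacPherson with a citation to \cite[\S 6.2]{G-M2}, and the paper only uses its consequence $\chi_x(\calIC)=\chi(p^{-1}(x))$ in the proof of Theorem~\ref{thm; charcycle}. Your argument is the standard (and correct) proof of that cited result: verify Deligne's axiomatic characterization for $Rp_*\bbC_Y[2d]$, with proper base change plus the smallness inequality giving the strict support bounds, and Verdier self-duality of $Rp_*\bbC_Y[d]$ (from $Y$ smooth and $p$ proper) giving the cosupport bounds, so uniqueness of the middle-perversity extension identifies the pushforward with $\calIC$; the Euler-characteristic statement then follows from proper base change, the even shift $[2d]$ leaving the alternating sum unchanged. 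This matches the degree normalization used in the paper (constant sheaf in degree $-2d$ on the smooth locus), so there is no gap; the only caveat worth keeping in mind is to fix that convention explicitly when writing out the support/cosupport inequalities, exactly as you flag at the end.
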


\subsection{Characteristic Cycle of the Intersection Cohomology Sheaf of $\tau_{m,n,k}$} 
\label{S; charcycle}
In terms of the determinantal varieties $\tau_{m,n,k}$, Proposition~\ref{prop; small} shows that $\nu\colon \hat\tau_{m,n,k}\to \tau_{m,n,k}$ is a small resolution. Let $X:=\tau_{m,n,k}\subset \bbP^{mn-1}:=M$ be the embedding. Since $\bbP^{mn-1}=\tau_{m,n,0}$, we consider the Whitney stratification $\{S_i:=\tau_{m,n,i}^\circ| i=0,1,\cdots ,n-1\}$ of $\bbP^{mn-1}$. One has the following observations:
\begin{enumerate}
\item $\overline{S_i}=\tau_{m,n,i}$ ; 
\item $X=\tau_{m,n,k}=\overline{S_k}$ ; 
\item $S_i\subset \overline{S_j}$ if and only if $i\geq j$. 
\end{enumerate}

The main result of this section is the following theorem:
\begin{thm}
\label{thm; charcycle}
For $i=0,1,\cdots ,n-1$, let $c_i:=c_i(\calICT)$ be the microlocal multiplicities. Then
\[
c_i = 
\begin{cases} 
0, & \mbox{if } i\neq k \\ 
1, & \mbox{if } i=k
\end{cases} \/.
\] 
Hence we have:
\[
\calCC(\calICT)=[T^*_{\tau_{m,n,k}}\bbP^{mn-1}]
\]
is irreducible.
\end{thm}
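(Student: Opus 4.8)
The plan is to apply the Microlocal Index Formula (Theorem~\ref{thm; microlocal}) to the constructible complex $\calF^\bullet=\calICT$, feeding it two pieces of data that are already in hand: the stalk Euler characteristics $\chi_j(\calICT)$, which we compute from the small resolution $\nu$, and the local Euler obstructions $e(j,i)$, which are the Pascal-triangle binomials of Theorem~\ref{thm; pascal}. The conclusion will then drop out of the fact that the resulting linear system is governed by a unipotent lower-triangular matrix.

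First I would assemble the inputs. Since $\nu\colon\hat\tau_{m,n,k}\to\tau_{m,n,k}$ is a small resolution (Proposition~\ref{prop; small}), Theorem~\ref{thm; stalk} gives $\calICT\cong R\nu_*\bbC_{\hat\tau_{m,n,k}}[2d]$, so for $x\in S_j=\tau_{m,n,j}^\circ$ we have $\chi_j(\calICT)=\chi(\nu^{-1}(x))$. By the description of the fibers of $\nu$ used in the proof of Proposition~\ref{prop; small}, $\nu^{-1}(x)\cong G(k,j)$ for $j\ge k$, and $\nu^{-1}(x)=\varnothing$ for $j<k$. The complex Grassmannian $G(k,j)$ has a cell decomposition with $\binom{j}{k}$ Schubert cells, all of even real dimension, so $\chi(G(k,j))=\binom{j}{k}$; hence in all cases
\[
\chi_j(\calICT)=\binom{j}{k},\qquad j=0,1,\dots,n-1 .
\]
On the other side, $e(j,i)=Eu_{\tau_{m,n,i}}(\tau_{m,n,j}^\circ)$, which by Theorem~\ref{thm; pascal} (taking the stratum index to be $i$ and the offset to be $j-i$ in formula~\eqref{eq; Eubinom}) equals $\binom{j}{i}$ for $i\le j$, and equals $0$ for $i>j$ by definition.

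Plugging these into Theorem~\ref{thm; microlocal}, for every $j$ we get
\[
\binom{j}{k}=\sum_{i=0}^{n-1}\binom{j}{i}\,c_i(\calICT).
\]
The matrix $E=\bigl(\binom{j}{i}\bigr)_{0\le j,i\le n-1}$ is lower triangular with all diagonal entries $\binom{j}{j}=1$, hence invertible over $\bbZ$; and the right-hand side vector $\bigl(\binom{j}{k}\bigr)_j$ is literally the $k$-th column of $E$, since $\binom{j}{k}=e(j,k)=E_{jk}$. Therefore the system has the unique solution $c_i(\calICT)=\delta_{ik}$. (One can also see this by hand: the $j=k$ equation reads $1=c_k$, and then induction on $j$ forces $c_j=0$ for $j>k$.) Consequently $\calCC(\calICT)=[T^*_{\tau_{m,n,k}}\bbP^{mn-1}]$, which is irreducible because $\tau_{m,n,k}$ is an irreducible variety.

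I do not expect a genuine obstacle here: the substantive work — the explicit value of the local Euler obstruction and the smallness of the Tjurina transform — has already been carried out, and this theorem is essentially the observation that those two facts fit together into a triangular linear system whose solution vector is a standard basis vector. The points that merely need care are checking that the hypotheses of the index formula hold ($\bbP^{mn-1}$ is smooth and complete, and the strata $\tau_{m,n,j}^\circ$ form a Whitney stratification over $\bbC$) and matching the indices correctly when specializing formula~\eqref{eq; Eubinom} to $e(j,i)=\binom{j}{i}$.
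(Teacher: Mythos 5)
Your proposal is correct and follows essentially the same route as the paper: compute $\chi_j(\calICT)=\binom{j}{k}$ from the small resolution via Theorem~\ref{thm; stalk}, take $e(j,i)=\binom{j}{i}$ from Theorem~\ref{thm; pascal}, and invert the resulting unipotent lower-triangular binomial system. Your added remark that the right-hand side is literally the $k$-th column of that matrix is a clean way to read off the solution, but it is the same argument.
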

\begin{proof}
Let $\nu\colon \hat\tau_{m,n,k}\to \tau_{m,n,k}$ be the small resolution. For any $p\in S_j=\tau_{m,n,j}^\circ$, we have $\nu^{-1}(p)=G(k,j)$.
Theorem~\ref{thm; stalk} shows that for any $j=0,1,\cdots ,n-1$, any $p\in S_j$, the stalk Euler characteristic equals:
\[
\chi_j(\calICT)=\chi_{p}(\calICT)=\chi(\nu^{-1}(p))=\binom{j}{k} \/.
\]
And Theorem~\ref{thm; pascal} shows that:
\[
e(j,i)=Eu_{\overline{S_i}}(S_j)=Eu_{\tau_{m,n,i}}(\tau_{m.n.j}^\circ)=\binom{j}{i} \/.
\]
Here when $a<b$ we make the convention that $\binom{a}{b}=0$.

Hence the Microlocal Index Formula~\ref{thm; microlocal} becomes:
\[
\chi_j(\calICT)=\binom{j}{k}=\sum_{i\in I}e(j,i)c_i(\calICT)=\sum_{i\in I}\binom{j}{i}c_i \/.
\]
One gets the following linear equation:
\[
  \left[ {\begin{array}{c}
   \binom{0}{k} \\
   \binom{1}{k} \\
   \cdots       \\
   \binom{k}{k} \\
   \binom{k+1}{k} \\
   \cdots        \\
   \binom{n-1}{k} 
  \end{array} } 
  \right]
=
\left[ {\begin{array}{ccccccc}
   \binom{0}{0}    &          0   &          &           &         &    &    \\
   \binom{1}{0}    &\binom{1}{0}  &          &           &         &    &    \\
   \cdots          &     \cdots   &          &           &         &    &    \\
   \binom{k}{0}    &  \binom{k}{1}& \cdots   & \binom{k}{k} &         &    &  \\                
   \binom{k+1}{0}  & \binom{k+1}{1}   &   \cdots        &    \binom{k+1}{k} & \binom{k+1}{k+1}        &    &   \\
   \cdots          &      \cdots    &     \cdots      &     \cdots    &    \cdots    &  \cdots  & \\
   \binom{n-1}{0}  & \binom{n-1}{1}   &    \cdots      &   \cdots    & \cdots  &\cdots   & \binom{n-1}{n-1}\\
  \end{array} } 
  \right]   
\cdot
\left[ {\begin{array}{c}
   c_0 \\
   c_1 \\
   \cdots       \\
   c_k\\
   c_{k+1} \\
   \cdots        \\
   c_{n-1} 
  \end{array} } 
  \right]   \/.   
\]
Notice that the middle matrix is invertible, since the diagonals are all non-zero, hence the solution vector $[c_0,c_1,\cdots ,c_{n-1}]^T$ is unique. Hence 
\[
c_i = 
\begin{cases} 
0, & \mbox{if } i\neq k \\ 
1, & \mbox{if } i=k
\end{cases} 
\]  
is the unique solution to the system.
\end{proof}
This theorem shows that the $IC$ characteristic cycle of the determinantal variety $\tau_{m,n,k}$  equals the conormal cycle $[T^*_{\tau_{m,n,k}}\bbP^{mn-1}]$. As pointed out in \cite[\S4.2]{MR2097164}, the Chern-Mather class of $\tau_{m,n,k}$ can be treated as the `shadow' of the projectived conormal cycle $[\bbP(T^*_{\tau_{m,n,k}}\bbP^{mn-1})]$. Hence if we know the Chern-Mather class of $\tau_{m,n,k}$, one can know explicitly what the $IC$ characteristic cycle $\calCC(\calICT)$ is. 

\section{Chern-Mather class of $\tau_{m,n,k}$}
\label{cmather}
\subsection{Chern-Mather class}
Since we know about the Nash Blowup of $\tau_{m,n,k}$ and the Nash tangent bundle $\calT$, one natural task is to compute the Chern-Mather class of $\tau_{m,n,k}$. Notice that $N_{m,n,k}$ is the projective vector bundle $\bbP(Q_1^\vee\otimes S_2)$ over $G(k,n)\times G(n-k,m)$, the computation would consist of integrations of certain Chern classes over a product of Grassmannians $G(k,n)\times G(n-k,m)$. 

However, the explicit expression of the Local Euler obstruction suggests a more efficient way: we can use the Tjurina transform $\hat\tau_{m,n,k}$ introduced in \S\ref{Tjurina}, instead of the Nash Blowup, and the computations will only involve a single Grassmannian.
\begin{lemma}
Let $\nu \colon \hat\tau_{m,n,k}\to \tau_{m,n,k}$ be the Tjurina transform of $\tau_{m,n,k}$. Then the Chern-Mather class of $\tau_{m,n,k}$ equals
\begin{align*}
c_M(\tau_{m,n,k})
&=\nu_*(c(T_{\hat\tau_{m,n,k}})\cap [\hat\tau_{m,n,k}])  \/.
\end{align*}
\end{lemma}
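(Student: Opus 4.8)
The plan is to reduce the claimed formula to the definition of the Chern-Mather class via the Nash blowup, using the fact (Theorem~\ref{thm; stalk}'s ingredient, Proposition~\ref{prop; small}) that $\nu$ is a small resolution, together with the key observation that $\hat\tau_{m,n,k}$ and the Nash blowup $N_{m,n,k}$ are closely related. Recall from \S\ref{S; nash} that $N_{m,n,k}$ is the Nash blowup of $\tau_{m,n,k}$, so by definition $c_M(\tau_{m,n,k}) = \pi_*(c(\calT)\cap [N_{m,n,k}])$, where $\calT$ is the Nash tangent bundle. Comparing with the Tjurina transform $\hat\tau_{m,n,k}$, which sits in $G(k,n)\times\bbP^{mn-1}$, there is a natural forgetful map $N_{m,n,k}\to\hat\tau_{m,n,k}$ (remembering only $(\Lambda,\varphi)$ and dropping $\Gamma$); I would first identify this map and show it is birational, in fact an isomorphism over the open stratum $\tau_{m,n,k}^\circ$, and that it fits into a commutative triangle with $\pi$ and $\nu$.

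The main step is then to show that the Chern class contributions agree after pushforward, i.e. that $\mu_*(c(\calT)\cap[N_{m,n,k}]) = c(T_{\hat\tau_{m,n,k}})\cap[\hat\tau_{m,n,k}]$, where $\mu\colon N_{m,n,k}\to\hat\tau_{m,n,k}$ is the forgetful map. One approach: both $N_{m,n,k}$ and $\hat\tau_{m,n,k}$ are explicit projective bundles over Grassmannians ($\bbP(Q_1^\vee\otimes S_2)$ over $G(k,n)\times G(n-k,m)$, respectively $\bbP(Q^{\vee m})$ over $G(k,n)$), so I can write down $c(\calT)$ (Lemma~\ref{lemma; nashtangent}) and $c(T_{\hat\tau_{m,n,k}})$ (the Euler sequence~\eqref{eulersequence}) explicitly and compare via the projection formula along $\mu$. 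Alternatively—and this is probably cleaner—I would argue that the map $\mu$ is itself birational and an isomorphism away from a locus of the right codimension, so that the Chern-Mather class is insensitive to which of the two resolutions one uses, provided one uses on $\hat\tau_{m,n,k}$ its \emph{tangent} bundle rather than a ``Nash tangent bundle'': this works precisely because $\hat\tau_{m,n,k}$ is smooth, so its Nash blowup is itself with Nash tangent bundle $T_{\hat\tau_{m,n,k}}$, and since $\nu$ is birational the Chern-Mather class computed from any smooth birational model that dominates the Nash blowup agrees—this last point being a standard consequence of the birational invariance built into the definition of $c_M$ through resolutions (cf. the defining property that $c_M$ depends only on $X$, combined with the functoriality used in \cite{MR629121,Kennedy}).

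The hardest part will be justifying that passing from the Nash blowup $N_{m,n,k}$ to the a priori ``wrong'' smooth model $\hat\tau_{m,n,k}$ does not change the pushforward of the total Chern class. This is not automatic from the definition of $c_M$, which is pinned to the Nash blowup; the rescue is that $\mu\colon N_{m,n,k}\to\hat\tau_{m,n,k}$ is a birational morphism of smooth varieties, and one must check that $\mu_*(c(\calT)\cap[N_{m,n,k}]) = c(T_{\hat\tau_{m,n,k}})\cap[\hat\tau_{m,n,k}]$ directly. The cleanest route is the explicit bundle computation: $\mu$ realizes $N_{m,n,k}$ as a further projective (or at least tower-of-Grassmannian-bundle) construction over $\hat\tau_{m,n,k}$, coming from choosing the subspace $\Gamma\supset\Ima\varphi$; on the locus where $\varphi$ has maximal rank this choice is unique, so the generic fiber of $\mu$ is a point, and I expect the relative tangent bundle of $\mu$ together with the Euler-type sequences to collapse $c(\calT)$ to exactly $\mu^*c(T_{\hat\tau_{m,n,k}})$ up to classes that push forward to zero. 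Carrying out this comparison of Chern classes on the two explicit bundles, using the exact sequences already displayed in the excerpt, is the technical heart; once it is done, $\pi_* = \nu_*\circ\mu_*$ gives the stated equality immediately.
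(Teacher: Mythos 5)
Your proposal has a genuine gap at its central step. Everything is made to rest on the identity $\mu_*(c(\calT)\cap[N_{m,n,k}])=c(T_{\hat\tau_{m,n,k}})\cap[\hat\tau_{m,n,k}]$ for the forgetful map $\mu\colon N_{m,n,k}\to\hat\tau_{m,n,k}$, and this identity is only ``expected,'' not proved. Two specific problems: (i) the structural picture you invoke is wrong --- $\mu$ is \emph{not} a projective or Grassmannian bundle (its fiber over $(\Lambda,\varphi)$ is the set of $(n-k)$-planes containing $\Ima\varphi$, which is a point on the dense stratum but jumps to $G(j,m-n+k+j)$ where the rank drops by $j$), so there is no ``tower of bundles'' over $\hat\tau_{m,n,k}$ along which Euler sequences collapse $c(\calT)$ to $\mu^*c(T_{\hat\tau_{m,n,k}})$; and (ii) your proposed ``cleaner route'' appeals to a birational invariance that does not exist: the Chern--Mather class is pinned to the Nash blowup with its Nash tangent bundle, and it is \emph{false} in general that a resolution $p\colon Y\to X$ (even a small one) satisfies $c_M(X)=p_*(c(T_Y)\cap[Y])$. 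The paper explicitly flags this as an unusual phenomenon, valid here only because of a numerical coincidence; also note that $\hat\tau_{m,n,k}$ does not dominate the Nash blowup --- the domination goes the other way --- so the ``any model dominating the Nash blowup'' heuristic does not even apply. Note too that your intermediate identity is a statement in $A_*(\hat\tau_{m,n,k})$, which is strictly stronger than the lemma (an equality after pushing forward to $\tau_{m,n,k}$), so even if salvageable it is harder than what is needed.

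The ingredient your plan never uses, and which is the actual reason the lemma holds, is the coincidence $Eu_{\tau_{m,n,k}}(\varphi)=\binom{k+i}{i}=\chi(G(k,k+i))=\chi(\nu^{-1}(\varphi))$ for $\varphi\in\tau_{m,n,k+i}^\circ$. The paper's proof runs: since the fibers of the Tjurina transform are Grassmannians whose Euler characteristics match the local Euler obstructions computed in Theorem~\ref{thm; pascal}, one has $\nu_*(\ind_{\hat\tau_{m,n,k}})=Eu_{\tau_{m,n,k}}$ as constructible functions; applying MacPherson's natural transformation $c_*$ and using its covariance together with the normalization on the smooth variety $\hat\tau_{m,n,k}$ gives $c_M(\tau_{m,n,k})=c_*(Eu_{\tau_{m,n,k}})=\nu_*c_*(\ind_{\hat\tau_{m,n,k}})=\nu_*(c(T_{\hat\tau_{m,n,k}})\cap[\hat\tau_{m,n,k}])$ in characteristic $0$. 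The extension to arbitrary algebraically closed fields is then obtained by observing that the difference of the two candidate classes, paired against powers of the hyperplane class, is a universal integer polynomial identity in Chern roots over Grassmannian bundles, hence independent of the characteristic. If you want to keep a purely intersection-theoretic route, you would have to actually carry out the degree comparison $\int_{N_{m,n,k}}c(\calT)c_1(\calO(1))^i=\int_{\hat\tau_{m,n,k}}c(T_{\hat\tau_{m,n,k}})c_1(\calO(1))^i$ for all $i$; as written, your argument asserts rather than establishes this.
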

\begin{proof}
First we assume our base field is algebraically closed of characteristic $0$, and let $c_*$ be MacPherson's natural transformation as defined in \S\ref{S; c_*}.
For any $\varphi\in\tau_{m,n,k+i}^\circ$, the fiber of $\hat\tau_{m,n,k}$ at $\varphi$ is $\nu^{-1}(\varphi)\cong G(k,k+i)$. Notice that the local Euler obstruction of $\tau_{m,n,k}$ at $\varphi$ equals
\[
Eu_{\tau_{m,n,k}}(\varphi)=\binom{k+i}{k}=\chi(G(k,k+i)) \/.
\]
Hence we have 
\begin{align*}
\nu_*(\ind_{\hat\tau_{m,n,k}})
&=\sum_{i=0}^{n-1-k} \chi(G(k,k+i)) \ind_{\tau_{m,n,k+i}^\circ}\\
&= \sum_{i=0}^{n-1-k} \binom{k+i}{k} \ind_{\tau_{m,n,k+i}^\circ} \\
&= Eu_{\tau_{m,n,k}} \/.
\end{align*}
Recall that for any variety $X$, $c_M(X)=c_*(Eu_X)$. By the functorial property of $c_*$ one gets
\begin{align*}
c_M(\tau_{m,n,k})
&=\nu_* c_*(\ind_{\hat\tau_{m,n,k}}) \\
&=\nu_*(c(T_{\hat\tau_{m,n,k}})\cap [\hat\tau_{m,n,k}]) \/.
\end{align*}
For an arbitrary algebraically closed field $K$, let $N_{m,n,k}$ be the Nash Blowup of $\tau_{m,n,k}$. Write the Chern-Mather class of $\tau_{m,n,k}$ as
\[
c_M(\tau_{m,n,k})=\sum_{i=0}^{mn-1} \beta_i [\bbP^{i}] \in A_*(\bbP^{mn-1}) \/.
\] 
One has 
\[
\beta_i=\int_{\tau_{m,n,k}} c_1(\calO(1))^i \cap c_M(\tau_{m,n,k}) \/.
\]
Hence it amounts to prove that for any $i=0,1,\cdots , mn-1$, the following identity holds true:
\begin{align*}
I_i
&:= \int_{N_{m,n,k}} c(\calT)c_1(\calO(1))^i\cap [N_{m,n,k}] \\
&- \int_{\hat\tau{m,n,k}} c(T_{\hat\tau{m,n,k}})c_1(\calO(1))^i\cap [\hat\tau{m,n,k}] \\
&=0 \/. 
\end{align*}
Let $q\colon N_{m,n,k}\to \hat\tau_{m,n,k}$ be the projection onto $\hat\tau_{m,n,k}$ away from the $G(n-k,m)$ component. It is both proper and flat, hence by the projection formula we have
\begin{align*}
I_i=\int_{N_{m,n,k}} (c(\calT)- c(q^*T_{\hat\tau{m,n,k}}))c_1(\calO(1))^i\cap [N_{m,n,k}]  \/.
\end{align*}
Since $N_{m,n,k}=\bbP(Q_1^{\vee}\otimes S_2)$ is the projective bundle over $G(k,n)\times G(n-k,m)$, $I_i$ can be written into a polynomial whose variables are the Chern roots of the universal subbundles $S_1,S_2$ and the universal quotient bundles $Q_1,Q_2$. This is an identity in the polynomial ring of integer coefficients, which is independent of the characteristic of the base field $K$. Hence $I_i=0$ holds true over characteristic $0$ fields shows that it holds true for arbitrary algebraically closed base field.
\end{proof}

\begin{remark}
When the base field is $\bbC$, the above lemma can be deduced from the following theorem.
\begin{thm*}[Theorem 3.3.1 \cite{Jones}]
Let $X$ be a subvariety of a smooth space $M$. Let $p\colon Y\to X$ be a small resolution of $X$. Under the assumption that $\calCC(\calIC)$ is irreducible, i.e., 
$\calCC(\calIC)=[\overline{T_{X_{sm}}^*M}]$ is the conormal cycle of $\tau_{m,n,k}$.
Then 
\[
c_M(X)=p_*(c(T_Y)\cap [Y]) \/.
\]
\end{thm*}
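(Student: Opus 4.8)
The plan is to realize both sides of the asserted identity as the image under MacPherson's natural transformation $c_*$ of one and the same constructible function, namely $Eu_X$. For the right-hand side this is immediate: $p$ is proper and $Y$ is smooth, so naturality of $c_*$ together with its normalization on smooth varieties gives $c_*(p_*\ind_Y)=p_*\,c_*(\ind_Y)=p_*(c(T_Y)\cap[Y])$. For the left-hand side, $c_*(Eu_X)=c_M(X)$ is MacPherson's definition of the Chern--Mather class. Hence the entire statement reduces to the single identity of constructible functions on $X$
\[
p_*\ind_Y \;=\; Eu_X .
\]

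First I would compute the left-hand side. By the definition of the pushforward of constructible functions, $(p_*\ind_Y)(x)=\chi(p^{-1}(x))$ for every $x\in X$. Since $p$ is a \emph{small} resolution, Theorem~\ref{thm; stalk} gives $\calIC\cong Rp_*\bbC_Y[2d]$ and hence $\chi_x(\calIC)=\chi(p^{-1}(x))$; that is, $p_*\ind_Y$ is precisely the stalk Euler characteristic function $x\mapsto \chi_x(\calIC)$ of the intersection cohomology sheaf of $X$.

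The crux is then to identify this stalk Euler characteristic function with $Eu_X$, and this is the only place where the hypothesis on $\calCC(\calIC)$ is used. Fix a Whitney stratification $\sqcup_i S_i$ of $M$ refining $X$; as $X$ is irreducible, write $S_k=X_{sm}$ for its dense stratum, so $\overline{S_k}=X$ and $[\overline{T^*_{X_{sm}}M}]=[T^*_{\overline{S_k}}M]$. The conormal cycles $[T^*_{\overline{S_i}}M]$ being linearly independent, the hypothesis $\calCC(\calIC)=[\overline{T^*_{X_{sm}}M}]$ is equivalent to saying that the microlocal multiplicities satisfy $c_i(\calIC)=\delta_{ik}$. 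Substituting this into the Kashiwara--Dubson microlocal index formula (Theorem~\ref{thm; microlocal}) makes the sum collapse to one term:
\[
\chi_j(\calIC)=\sum_i e(j,i)\,c_i(\calIC)=e(j,k)=Eu_{\overline{S_k}}(S_j)=Eu_X(S_j)
\]
for every stratum $S_j$, where one uses $e(j,i)=0$ whenever $S_j\not\subset\overline{S_i}$. Since $\chi(\calIC)$ and $Eu_X$ are both constructible functions supported on $X$ and constant on each stratum $S_j$, and they take equal values on every stratum, they coincide as constructible functions. With the previous paragraph this gives $p_*\ind_Y=\chi(\calIC)=Eu_X$, and applying $c_*$ completes the argument.

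I expect the main obstacle to be bookkeeping with sign and shift conventions rather than a new idea. One must make sure that ``$\calCC(\calIC)$ irreducible'' really yields $c_i(\calIC)=\delta_{ik}$ with coefficient $+1$, as Theorem~\ref{thm; microlocal} expects; what makes this come out cleanly is that the shift by $2d$ is even, so it contributes no sign either to the stalk Euler characteristic $\chi_x(\calIC)$ or to the characteristic cycle. One should also be careful to apply the microlocal index formula on a compact ambient space, so one takes $M$ projective (e.g.\ $M=\bbP^N$) with $X$ complete, and to note that the whole argument is set over $\bbC$ (or a base field of characteristic $0$), since it invokes $c_*$ and the topological constructions underlying $\calCC$ --- which is exactly the setting in which the cited theorem is stated.
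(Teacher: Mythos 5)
Your argument is correct, but it is worth noting how it sits relative to the paper: the paper never proves this statement itself (it is quoted from \cite{Jones} inside a remark), and the related Lemma it does prove, $c_M(\tau_{m,n,k})=\nu_*(c(T_{\hat\tau_{m,n,k}})\cap[\hat\tau_{m,n,k}])$, is obtained by a different, more special route. Both arguments reduce to the same key identity of constructible functions, $p_*\ind_Y=Eu_X$, followed by functoriality of $c_*$ and the normalization on the smooth $Y$; the difference is how that identity is reached. The paper, working only with $X=\tau_{m,n,k}$ and $Y=\hat\tau_{m,n,k}$, gets it by direct computation: Theorem~\ref{thm; pascal} gives $Eu_{\tau_{m,n,k}}=\binom{k+i}{k}=\chi(G(k,k+i))=\chi(\nu^{-1}(\varphi))$ on each stratum $\tau_{m,n,k+i}^\circ$, so $\nu_*\ind_{\hat\tau_{m,n,k}}=Eu_{\tau_{m,n,k}}$ without ever using irreducibility of $\calCC(\calICT)$ (and a Chern-roots specialization then extends the conclusion to arbitrary algebraically closed fields). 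You instead derive the identity in general from the hypothesis $\calCC(\calIC)=[\overline{T^*_{X_{sm}}M}]$: linear independence of the conormal cycles gives $c_i(\calIC)=\delta_{ik}$, the Kashiwara--Dubson formula (Theorem~\ref{thm; microlocal}) collapses to $\chi_j(\calIC)=e(j,k)=Eu_X(S_j)$, and the small-resolution stalk theorem (Theorem~\ref{thm; stalk}) identifies $\chi_j(\calIC)$ with $\chi(p^{-1}(x))=(p_*\ind_Y)(x)$. This is precisely the paper's proof of Theorem~\ref{thm; charcycle} run in reverse: there, known Euler obstructions and stalk Euler characteristics determine the microlocal multiplicities; here, known multiplicities plus stalk data recover $Eu_X$. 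What your route buys is the general statement for any $X$ admitting a small resolution with irreducible IC characteristic cycle (over $\bbC$, with the compactness/properness caveats you correctly flag); what the paper's direct computation buys, for its specific varieties, is independence from the complex-analytic and characteristic-zero machinery. Your handling of the conventions (even shift $[2d]$, coefficient $+1$ on the dense stratum, $e(j,i)=0$ when $S_j\not\subset\overline{S_i}$) matches the paper's stated normalizations, so the argument goes through as written.
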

Just as we have proved in \S\ref{S; char}, the assumption holds true for $\tau_{m,n,k}$, and the Tjurina transform $\hat\tau_{m,n,k}$ is indeed a small resolution of $\tau_{m,n,k}$.

This theorem shows that under this assumption we can use any small resolution and its tangent bundle to compute the Chern-Mather class. However, as pointed out in \cite[Rmk 3.2.2]{Jones}, this is a rather unusual phenomenon. It is known to be true for Schubert varieties in a Grassmannian, for certain Schubert varieties in flag manifolds of types B, C, and D, and for theta divisors of Jacobians (cf. \cite{MR1642745}). 
By the previous section, determinantal varieties also share this 
property.
\end{remark}

Actually this class $\nu_*(c(T_{\hat\tau_{m,n,k}})\cap [\hat\tau_{m,n,k}])$ has been studied in \cite{XP1}, as the bridge to get the Chern-Schwartz-MacPherson class of $\tau_{m,n,k}$. Now knowing that this is indeed the Chern-Mather class of $\tau_{m,n,k}$, the first part of the main Theorem in \cite{XP1} can be re-stated as:
\begin{thm}
\label{thm; cmalgorithm}
Let $S$ and $Q$ be the universal sub and quotient bundle over the Grassmannian $G(k,n)$. For $k\geq 1$, $i,p=0,1\cdots ,m(n-k)$, we let 
\begin{align*}
A_{i,p}(k)= A_{i,p}(m,n,k) &:=\int_{G(k,n)} c(S^\vee\otimes Q)c_i(Q^{\vee m})c_{p-i}(S^{\vee m}) \cap [G(k,n)] \\
B_{i,p}(k)=B_{i,p}(m,n,k) &:=\binom{m(n-k)-p}{i-p}
\end{align*}
and let
\[
A(k)=A(m,n,k)=\left[ A_{i,p}(k) \right]_{i,p}\quad, \quad B(k)=B(m,n,k)=\left[ B_{i,p}(k) \right]_{i,p}
\]
be $m(n-k)+1 \times m(n-k)+1$ matrices. Here we let $\binom{a}{b}=0$ if $a<b$ or $a<0$ or $b<0$.
Then
\[
c_M(\tau_{m,n,k})=\tr(A(k)\cdot \calH(k)\cdot B(k)) \/.
\]
\end{thm}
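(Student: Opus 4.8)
The plan is to reduce the statement to the explicit computation already carried out in~\cite{XP1}, using the previous Lemma as the bridge. By that Lemma $c_M(\tau_{m,n,k})=\nu_*\big(c(T_{\hat\tau_{m,n,k}})\cap[\hat\tau_{m,n,k}]\big)$, pushed forward to $A_*(\bbP^{mn-1})$ along $\tau_{m,n,k}\hookrightarrow\bbP^{mn-1}$. Writing $c_M(\tau_{m,n,k})=\sum_j\beta_j[\bbP^j]$, the coefficients are $\beta_j=\int_{\bbP^{mn-1}}c_1(\calO(1))^{mn-1-j}\cap c_M(\tau_{m,n,k})$, and since the composite $\hat\tau_{m,n,k}\to\tau_{m,n,k}\hookrightarrow\bbP^{mn-1}$ pulls $\calO_{\bbP^{mn-1}}(1)$ back to $\calO_{\hat\tau_{m,n,k}}(1)$, the projection formula gives $\beta_j=\int_{\hat\tau_{m,n,k}}\zeta^{\,mn-1-j}\,c(T_{\hat\tau_{m,n,k}})\cap[\hat\tau_{m,n,k}]$ with $\zeta=c_1(\calO_{\hat\tau_{m,n,k}}(1))$.

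Next I would evaluate the integrand using $\hat\tau_{m,n,k}\cong\bbP(Q^{\vee m})$ and the Euler sequence~\eqref{eulersequence}: since $c(\calO_{\hat\tau_{m,n,k}})=1$ and $T_{G(k,n)}\cong S^\vee\otimes Q$, that sequence gives $c(T_{\hat\tau_{m,n,k}})=\rho^*c(S^\vee\otimes Q)\cdot c\big(\rho^*(Q^{\vee m})\otimes\calO_{\hat\tau_{m,n,k}}(1)\big)$. Expanding the twist through the identity $c_i\big(\rho^*(Q^{\vee m})\otimes\calO(1)\big)=\sum_p\binom{m(n-k)-p}{i-p}\,\rho^*c_p(Q^{\vee m})\,\zeta^{\,i-p}$ produces precisely the binomial entries $B_{i,p}(k)$. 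Pushing the resulting class down along $\rho\colon\bbP(Q^{\vee m})\to G(k,n)$ turns each surviving power of $\zeta$ into a Segre class of $Q^{\vee m}$, hence --- by the sequence $0\to Q^{\vee m}\to\calO^{\,nm}\to S^{\vee m}\to0$, which gives $c(Q^{\vee m})\,c(S^{\vee m})=1$ --- into a Chern class of $S^{\vee m}$; paired with the already-present factor $c(S^\vee\otimes Q)$, integration over $G(k,n)$ of $c(S^\vee\otimes Q)\,c_i(Q^{\vee m})\,c_{p-i}(S^{\vee m})$ is exactly $A_{i,p}(k)$. The remaining bookkeeping --- the powers $\zeta^{\,mn-1-j}$, the $\bbP^{mn-1}$ hyperplane classes $[\bbP^j]$, and the index shifts coming from the relation defining $\bbP(Q^{\vee m})$ --- is what the matrix $\calH(k)$ records, and assembling $\sum_j\beta_j[\bbP^j]$ reorganizes the double sum over $i$ and $p$ as $\tr\big(A(k)\cdot\calH(k)\cdot B(k)\big)$.

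In fact this computation is literally the content of the main theorem of~\cite{XP1}, where the class $\nu_*\big(c(T_{\hat\tau_{m,n,k}})\cap[\hat\tau_{m,n,k}]\big)$ is shown to equal $\tr\big(A(k)\cdot\calH(k)\cdot B(k)\big)$; the previous Lemma identifies that class with $c_M(\tau_{m,n,k})$, which proves the theorem. So no genuinely new geometric input is needed --- the only real work is the index-and-sign bookkeeping, and the step I expect to be fussiest is aligning the three conventions that meet here: the convention for $\bbP(Q^{\vee m})$ and its Segre pushforward, the twist by $\calO_{\hat\tau_{m,n,k}}(1)$ in the Euler sequence, and the arithmetic of hyperplane powers on $\bbP^{mn-1}$.
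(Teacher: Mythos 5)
Your proposal is correct and follows essentially the same route as the paper: the paper proves the preceding Lemma identifying $c_M(\tau_{m,n,k})$ with $\nu_*(c(T_{\hat\tau_{m,n,k}})\cap[\hat\tau_{m,n,k}])$ and then simply restates the first part of the main theorem of \cite{XP1}, which computes that pushforward as $\tr(A(k)\cdot\calH(k)\cdot B(k))$, exactly as you do. Your sketch of the projective-bundle computation agrees with how \cite{XP1} produces the entries $B_{i,p}$ (tensoring $\rho^*(Q^{\vee m})$ by $\calO(1)$) and $A_{i,p}$ (pushforward along $\rho$ and the relation $c(Q^{\vee m})c(S^{\vee m})=1$), the only slip being that the coefficient extraction should read $\beta_j=\int c_1(\calO(1))^{j}\cap c_M(\tau_{m,n,k})$ rather than the complementary power, which does not affect the argument.
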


\subsection{Projectived Conormal Cycle}
Let $X\subset M$ be a subvariety of $m$-dimensional ambient space. The \textit{projectived conormal cycle} of $X$ is a $m-1$-dimensional cycle in the total space $\bbP(T^*M)$, defined as $Con(X):=[\bbP(T^*_X M)]$. For the determinantal variety $X=\tau_{m,n,k}\subset \bbP^N$, $Con(X)$
is a $N-1$ dimensional cycle in $\bbP(T^* \bbP^N)$. Recall that $\bbP(T^* \bbP^N)$ can be realized as a divisor
of class $h_1+h_2$ in $\bbP^N\times \bbP^N$. Here $N=mn-1$. As we have proved in Theorem~\ref{thm; charcycle}, the (affine) conormal cycle is indeed the $IC$ characteristic cycle of $\tau_{m,n,k}$. 

Write the Chern-Mather class of $\tau_{m,n,k}$ as
\[
c_M(\tau_{m,n,k}) = \sum_{l=0}^N \beta_l h_1^{N-l} \cap [\bbP^N] \/.
\]
Tracing definitions (see \cite[\S4.2]{MR2097164}) shows that
\begin{align*}
Con(\tau_{m,n,k})
&=  (-1)^{(m+k)(n-k)-1}\sum_{l=0}^{N-1} (-1)^l\beta_l (h_1+h_2)^{l+1} h_1^{mn-l}\cap [\bbP^N \times \bbP^N] \\
&=  (-1)^{(m+k)(n-k)-1}\sum_{j=1}^{N-1} \sum_{l=j-1}^{N-1} (-1)^l\beta_l\binom{l+1}{j} h_1^{mn-j}h_2^{j}\cap [\bbP^N \times \bbP^N]\\
&=  (-1)^{(m+k)(n-k)-1}\sum_{j=1}^{mn-2} \sum_{l=j-1}^{mn-2} (-1)^l\beta_l\binom{l+1}{j} h_1^{mn-j}h_2^{j}\cap [\bbP^N \times \bbP^N] \/.
\end{align*}
Hence the Chern-Mather class determines the conormal cycle as a class in $\bbP^N \times \bbP^N$.

For projective varieties, as pointed in \cite[Section 2.4]{Aluffi}, the coefficients of the conormal cycle correspond to the degree of the polar classes. 
Let $X\subset \bbP^N$ be a $d$-dimensional projective variety, the $k$-th polar class of $X$ is defined as follows:
\[
[M_k]:=\overline{\{x\in X_{sm} | \dim (T_x X_{sm}\cap L_k)\geq k-1  \}} \/.
\]
Here $L_k\subset L^N$ is a linear subspace of codimension $d-k+2$. This rational equivalence class is independent of $L_k$ for general $L_k$. The polar classes are closely connected to the Chern-Mather class of $X$, as pointed out in \cite{Piene}:
\begin{thm}[Theorem 3~\cite{Piene}]
The $k$-th polar class of $X$ is given by
\[
[M_k]=\sum_{i=0}^k (-1)^i \binom{d-i+1}{d-k+1}  H^{k-i}\cap c^i_M(X).
\]
Here $c_M^i(X)$ is the codimension $i$ piece of the Chern-Mather class of $X$.
\end{thm}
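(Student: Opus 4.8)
The plan is to exhibit $M_k$ as a degeneracy locus on the Nash blowup and to read off its class from the Thom--Porteous formula. Write $d=\dim X$, $H=c_1(\calO_{\bbP^N}(1))$, and let $\nu\colon NB\to X$ be the Nash blowup with Nash tangent bundle $\calT$ of rank $d$, so that $c_M(X)=\nu_*(c(\calT)\cap[NB])$ and $c_M^i(X)=\nu_*(c_i(\calT)\cap[NB])$. Besides $\calT$ I would use the \emph{affine Nash tangent bundle} $\hat\calT\subset\calO_{NB}^{\,N+1}$ of rank $d+1$, namely the pullback under the natural map $NB\to G(d+1,N+1)$ of the tautological subbundle, so that over a smooth point $x$ the fibre $\hat\calT_x\subset K^{N+1}$ is the affine cone over the embedded projective tangent space $\mathbb{T}_xX\subset\bbP^N$. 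The two bundles sit in an Euler-type sequence
\[
0\longrightarrow \calO(-1)\longrightarrow \hat\calT\longrightarrow \calT\otimes\calO(-1)\longrightarrow 0,
\]
with $\calO(-1)=\calO_{\bbP^N}(-1)|_X$ pulled back to $NB$; dualizing and computing Chern classes gives $c(\hat\calT^\vee)=\sum_{j\ge 0}(-1)^j\,c_j(\calT)\,(1+H)^{d+1-j}$ (a finite sum, as $c_j(\calT)=0$ for $j>d$).

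Next I would translate the polar condition into a rank condition. Fix a general linear subspace $W\subset K^{N+1}$ with $\dim W=N-d+k-1$, so that $L_k:=\bbP(W)$ has codimension $d-k+2$ in $\bbP^N$, and form the bundle map $\psi\colon \hat\calT\to (K^{N+1}/W)\otimes\calO_{NB}$ induced by the inclusion $\hat\calT\subset\calO_{NB}^{\,N+1}$. Over a smooth point $x$ the subspaces $\hat\calT_x$ and $W$ are the affine cones over $\mathbb{T}_xX$ and $L_k$, so $\dim\ker\psi_x=\dim(\hat\calT_x\cap W)=\dim(\mathbb{T}_xX\cap L_k)+1$; hence $\psi$ has generic rank $d-k+2$ and the polar locus $\{x\in X_{sm}\mid \dim(\mathbb{T}_xX\cap L_k)\ge k-1\}$ is the image under $\nu$ of the corank-one degeneracy locus $D_{d-k+1}(\psi)\subset NB$, of expected codimension $\bigl((d+1)-(d-k+1)\bigr)\cdot 1=k$. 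A Kleiman--Bertini argument applied to the family of these maps as $W$ varies over the relevant Grassmannian shows that for general $W$ this locus is pure of codimension $k$, generically reduced, and meets $\nu^{-1}(X_{sm})$ in a dense open subset, so that $\nu$ identifies it with $M_k$ and Thom--Porteous computes its class.

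Then I would invoke Thom--Porteous. Because the target $(K^{N+1}/W)\otimes\calO_{NB}$ is trivial, the corank-one degeneracy-locus class collapses to a single Chern class,
\[
[M_k]=c_{(d+1)-(d-k+2)+1}(\hat\calT^\vee)\cap[NB]=c_k(\hat\calT^\vee)\cap[NB],
\]
equivalently $\Delta^{(k)}_{1}(s(\hat\calT))=(-1)^k c_k(\hat\calT)$, the determinantal relation between Segre and Chern classes. Substituting the expansion of $c(\hat\calT^\vee)$ from the first step and extracting the degree-$k$ part, and using $\binom{d-i+1}{k-i}=\binom{d-i+1}{d-k+1}$,
\[
c_k(\hat\calT^\vee)=\sum_{i=0}^{k}(-1)^i\binom{d-i+1}{\,d-k+1\,}H^{k-i}\,c_i(\calT).
\]
Pushing forward by $\nu$, using the projection formula together with $\nu_*(c_i(\calT)\cap[NB])=c_M^i(X)$ (the class $H$ being pulled back from $\bbP^N$), yields
\[
[M_k]=\sum_{i=0}^{k}(-1)^i\binom{d-i+1}{\,d-k+1\,}H^{k-i}\cap c_M^i(X),
\]
as claimed.

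The step I expect to be the main obstacle is the genericity in the second paragraph: since $NB$ need not be smooth and the maps $\psi$ are not a priori sufficiently generic, one must argue carefully that for general $W$ the locus $D_{d-k+1}(\psi)$ has the expected dimension, is generically reduced, and has its general point in $X_{sm}$ (where $\nu$ is an isomorphism and all the bundles in play are honest), so that its pushforward is genuinely $[M_k]$ and Thom--Porteous applies with no excess-intersection correction; the rest is formal Chern-class bookkeeping. I remark that this computation is the intersection-theoretic counterpart of the $Con(X)$-versus-$c_M(X)$ correspondence recorded above: the degrees $\deg[M_k]$ of the polar classes are precisely the coefficients of the class of the projectived conormal cycle $Con(X)$ in $\bbP^N\times\bbP^N$, and Piene's identity is what one obtains by inverting that bidegree expansion.
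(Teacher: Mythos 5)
This statement is quoted from Piene's paper; the present paper gives no proof of it, so there is nothing internal to compare against --- what you have written is, in essence, a reconstruction of Piene's own argument (Nash transform, the rank-$(d+1)$ tautological bundle, polar loci as degeneracy loci, Thom--Porteous). Your Chern-class bookkeeping is correct: the Euler-type sequence $0\to\calO(-1)\to\hat\calT\to\calT\otimes\calO(-1)\to 0$ is exactly the sequence this paper itself uses to compute the Nash tangent bundle of $N_{m,n,k}$, the expansion $c_k(\hat\calT^\vee)=\sum_{i=0}^k(-1)^i\binom{d-i+1}{d-k+1}H^{k-i}c_i(\calT)$ is right, the numerics of the degeneracy locus check out (target of rank $d-k+2$, corank one, expected codimension $k$, Schubert condition $\dim(\hat\calT_x\cap W)\ge k$ of codimension $k$ in $G(d+1,N+1)$), and the $k\times k$ Thom--Porteous determinant in Segre classes does collapse to $c_k(\hat\calT^\vee)$ because the target is trivial. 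The projection-formula pushforward then gives Piene's identity, and your closing remark that the polar degrees are the coefficients of $Con(X)$ is the correct link to the way the formula is used in this section.

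The one step that is genuinely not formal is the one you flag yourself: you must know that for general $W$ the degeneracy scheme $D_{d-k+1}(\psi)$ is purely of the expected codimension, generically reduced, and has all of its generic points over $X_{sm}$ (where $\nu$ is an isomorphism), so that its pushforward is the reduced polar cycle with multiplicity one and no contribution from the exceptional set. The clean way to get this is to observe that $D_{d-k+1}(\psi)$ is the preimage, under the classifying map $NB\to G(d+1,N+1)$ determined by $\hat\calT\subset\calO_{NB}^{N+1}$, of the special Schubert variety attached to $W$, and to apply Kleiman's transversality theorem to the $GL_{N+1}$-translates of that Schubert variety, stratifying $NB$ by $\nu^{-1}(X_{sm})$ and its complement; this is valid in characteristic zero, which is the setting in which the paper invokes the theorem (over $\bbC$, for the conormal-cycle computation). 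In positive characteristic Kleiman's theorem is not available in this form and the genericity/reducedness requires the more careful treatment given in Piene's original paper; so as written your argument proves the formula in characteristic zero, which suffices here, but you should not claim it verbatim over an arbitrary algebraically closed field without that extra input.
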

In terms of the determinantal varieties $\tau_{m,n,k}$, if we write 
\[
c_M(\tau_{m,n,k}) = \sum_{l=0}^N \beta_l [\bbP^l],
\]
then the $l$-th polar class of $\tau_{m,n,k}$ is
\begin{align*}
[M_l]
=& \sum_{i=0}^{l} (-1)^i \binom{(m+k)(n-k)-i}{(m+k)(n-k)-l} \beta_{(m+k)(n-k)-1-i} 
[\bbP^{(m+k)(n-k)-1-l}] \/.
\end{align*}

\subsection{Examples}
\label{example}
\subsubsection{Chern-Mather class}
Using the Schubert2 package in Macaulay2~\cite{M2}, we are able to compute the Chern-Mather class of determinantal varieties. Here are some examples.
\begin{enumerate}
\item $\tau_{3,3,s}$ \\
\begin{tabular}{c | ccccccccc}
  Table  & $\bbP^{0}$ & $\bbP^{1}$  & $\bbP^{2}$  & $\bbP^{3}$  & $\bbP^{4} $  & $\bbP^{5} $ & $\bbP^{6}$  & $\bbP^{7}$ & $\bbP^{8}$ \\
\hline
$c_M(\tau_{3,3,0})$ & 9  & 36 & 84  & 126 & 126  & 84 & 36 & 9 & 1 \\
$c_M(\tau_{3,3,1})$ & 18 & 54 & 102 & 126 & 102  & 54 & 18 & 3 & 0 \\
$c_M(\tau_{3,3,2})$ & 9  & 18 & 24  & 18  & 6    &  0 &  0 & 0 & 0\\
\end{tabular}

\item $\tau_{4,3,s}$. \\
\begin{small}
\begin{tabular}{c | *{12}{c}}
  Table  & $\bbP^0$ & $\bbP^1$ & $\bbP^2$ & $\bbP^3$ & $\bbP^4$ & $\bbP^5$ & $\bbP^6$ & $\bbP^7$ &$\bbP^8$ & $\bbP^9$ & $\bbP^{10}$ & $\bbP^{11}$  \\
\hline
$c_M(\tau_{4,3,0})$       & 12  & 66 & 220 & 495 & 792 & 924  & 792 & 495 & 220& 66 & 12& 1 \\
$c_M(\tau_{4,3,1})$       & 24  & 96 & 248 & 444 & 564 & 514  & 336 & 153 & 44 & 6  & 0 & 0  \\
$c_M(\tau_{4,3,2})$       & 12  & 30 & 52  & 57  & 36  & 10   &  0  & 0   & 0  & 0  & 0 & 0  \\
\end{tabular}
\end{small}
\item $\tau_{4,4,s}$ \\
\begin{tabular}{c | *{8}{c}}
  Table  & $\bbP^{0}$ & $\bbP^{1}$  & $\bbP^{2}$  & $\bbP^{3}$  & $\bbP^{4} $  & $\bbP^{5} $ & $\bbP^{6}$  & $\bbP^{7}$  \\
\hline
$c_M(\tau_{4,4,1})$ &48 &288 &1128 &3168 &6672 & 10816 & 13716 & 13716    \\
$c_M(\tau_{4,4,2})$ &48 &216 &672 &1524 &2592 &3368 &3376  &2602     \\
$c_M(\tau_{4,4,3})$ &16 &48  &104 &152  &144  &80   & 20   & 0       \\
\end{tabular}
\vskip .05in
\begin{tabular}{c | *{8}{c}}
  Table  & $\bbP^{8}$  & $\bbP^{9}$ & $\bbP^{10}$ & $\bbP^{11}$ & $\bbP^{12}$ & $\bbP^{13}$ & $\bbP^{14}$ & $\bbP^{15}$   \\
\hline
$c_M(\tau_{4,4,1})$ &10816 &6672 &3168 &1128 &288 &48  & 4 & 0 \\
$c_M(\tau_{4,4,2})$ &1504 &616   &160  & 20  & 0  & 0  & 0 & 0  \\
$c_M(\tau_{4,4,3})$ & 0   & 0   & 0   & 0   & 0  & 0  & 0 & 0  \\
\end{tabular}
\end{enumerate}

\subsubsection{Projectived Conormal Cycle}
\begin{enumerate}
\item $\tau_{4,4,s}$ \\
\begin{small}
\begin{tabular}{c | *{7}{c}}
 Table & $h_1^{15}h_2$ & $h_1^{14}h_2^2$ & $h_1^{13}h_2^3$ & $h_1^{12}h_2^4$ & $h_1^{11}h_2^5$ & $ h_1^{10}h_2^6 $  & $h_1^9h_2^7$ \\
\hline
$Con(\tau_{4,4,1})$     & 0  & 0 & 0& 0& 0 & 0 & 0  \\
$Con(\tau_{4,4,2})$     & 0  & 0 & 0 & 20 & 80 & 176 & 256 \\
$Con(\tau_{4,4,3})$     & 4  & 12 & 36 & 68 &  84 &  60 &  20    \\
\end{tabular}
\vskip .05in
\begin{tabular}{c | *{8}{c}}
 Table &$ h_1^8h_2^8$ & $h_1^7h_2^9$ & $h_1^6h_2^{10}$ & $h_1^5h_2^{11}$ & $h_1^4h_2^{12}$ & $h_1^3h_2^{13} $ & $h_1^2h_2^{14}$ & $h_1h_2^{15}$ \\
\hline
$Con(\tau_{4,4,1})$    & 0   & 20 & 60 & 84 & 68 & 36 & 12 & 4     \\
$Con(\tau_{4,4,2})$    & 286   & 256 & 176 & 80  & 20 & 0  & 0  & 0 \\
$Con(\tau_{4,4,3})$    & 0     &   0 &  0  &  0  &  0 &  0 &  0 & 0    \\
\end{tabular}
\end{small}
\item $\tau_{5,4,s}$ \\ 
\begin{tiny}
\hspace*{-0.5cm}
\begin{tabular}{c | *{9}{c}}
  Table  & $h_1^{19}h_2$ & $h_1^{18}h_2^2$ & $h_1^{17}h_2^3$ & $h_1^{16}h_2^4$ & $h_1^{15}h_2^5$ & $ h_1^{14}h_2^6 $  & $h_1^{13}h_2^7$&$ h_1^{12}h_2^8$ & $h_1^{11}h_2^9$ \\
\hline
$Con(\tau_{5,4,1})$       & 0  & 0  & 0   & 0  & 0  & 0   & 0    & 0    &  0    \\
                         
$Con(\tau_{5,4,2})$       & 0  & 0  & 0   & 0  & 0  & 50 & 240 & 595 & 960  \\
                         
$Con(\tau_{5,4,3})$       & 0  & 10 & 40  & 105 & 176 & 190 & 120 & 35   & 0      \\
\end{tabular}
\vskip .05in 
\begin{tabular}{c | *{10}{c}}
\hspace*{-1cm}
  Table   & $h_1^{10}h_2^{10}$ & $h_1^9h_2^{11}$ & $h_1^8h_2^{12}$ & $h_1^7h_2^{13} $ & $h_1^6h_2^{14}$ & $h_1^5h_2^{15}$ & $h_1^4h_2^{16}$ & $h_1^3h_2^{17}$ & $h_1^2h_2^{18}$ & $h_1h_2^{19}$ \\
\hline
$Con(\tau_{5,4,1})$   &   0      & 0    & 35  & 120 & 190 & 176 & 105 & 40 & 10 & 0    \\
                         
$Con(\tau_{5,4,2})$   & 1116    & 960 & 595 & 240 & 50  & 0    & 0    & 0   & 0   & 0  \\
                         
$Con(\tau_{5,4,3})$   & 0        & 0    & 0    & 0    & 0    & 0    & 0    & 0   & 0   & 0  
\end{tabular}
\end{tiny}
\end{enumerate}

\newpage

\bibliography{eu}

\end{document}